\def\X{{\mathbb X}}
\newcommand{\ocap}{\textcircled{$\scriptstyle{\cap}$}}
\newcommand{\ocup}{\textcircled{$\scriptstyle{\cup}$}}
\newcommand{\bi}{\begin{itemize}}
\newcommand{\ei}{\end{itemize}}
\newcommand{\be}{\begin{enumerate}}
\newcommand{\ee}{\end{enumerate}}
\newtheorem{theorem}{Theorem}
\newtheorem{lemma}{Lemma}
\newtheorem{definition}{Definition}
\newtheorem{corollary}{Corollary}	
\newtheorem{conjecture}{Conjecture}
\title{Belief likelihood function 
for generalised logistic regression}
\author{Fabio Cuzzolin \\ School of Engineering, Computing and Mathematics \\
Oxford Brookes University \\ Oxford, UK} 
\begin{document}

\maketitle

\begin{abstract}
The notion of belief likelihood function of repeated trials is introduced, 
whenever the uncertainty for individual trials is encoded by a belief measure (a finite random set).
This generalises the traditional likelihood function, and provides a natural setting for belief inference from statistical data. Factorisation results are proven for the case in which conjunctive or disjunctive combination are employed, leading to analytical expressions for the lower and upper likelihoods of `sharp' samples in the case of Bernoulli trials, and to the formulation of a generalised logistic regression framework. 
\end{abstract}

\section{Introduction} \label{sec:introduction}

\emph{Logistic regression} \cite{Cox58} is a popular statistical method for modelling data in which one or more independent observed variables determine an outcome, represented by a binary variable. The framework can also be extended to the multinomial case, and 
is widely used in various fields, including machine learning, medical diagnosis, and social sciences, to cite a few.
Despite its successes, the method has serious limitations. In particular, it has been shown to consistently and sharply underestimate the probability of `rare' events \cite{King2001}. The term \cite{Falk04} denotes cases in which the training data are of insufficient quality, in the sense that they do not represent well enough the underlying distribution. As a result,
scientists are forced to infer probability distributions
using information captured in `normal' times (e.g. while a nuclear power plant is working nominally), whereas these distributions are later used to extrapolate results at the `tail' of the curve.

Although corrections to logistic regression have been proposed \cite{King2001},
the root cause of the problem, in our view, our very models of uncertainty are themselves affected by uncertainty: a phenomenon often called `Knightian' uncertainty.
The latter can be explicitly modelled by
considering convex sets of probability
distributions, or `credal sets \cite{Levi80,kyburg87bayesian}.
Random sets \cite{matheronrandom,Nguyen78,Shafer87b,Hestir91}, in particular, are a sub-class of credal sets induced by probability distributions on the collection of all subsets of the sample space.
In the finite case random sets are often called \emph{belief functions}, a term introduced by Glenn Shafer \cite{Shafer76} from a subjective probability perspective.

As we show here, the logistic regression framework can indeed be generalised to the case of belief functions, which themselves {generalise} classical discrete probability measures.
Given a sample space $\X$,
the traditional likelihood function 
is equal to the conditional probability of the data given a parameter $\theta \in \Theta$, i.e., a  family of probability distribution functions (PDFs) over $\X$ parameterised by $\theta$:
$L(\theta|X) \doteq p(X|\theta)$, $\theta \in \Theta.$
As originally proposed by Shafer and Wasserman \cite{Shafer76,Wasserman1987some,Wasserman90}, 
belief functions can indeed be built from traditional likelihood functions.
However, as we argue here, one can directly define a \emph{belief likelihood function}, mapping a sample observation $x \in \X$ to a real number, 
as a natural set-valued generalisation of the conventional likelihood.
It is natural to define such a belief likelihood function as family of belief functions on $\X$, $Bel_\X(.|\theta)$, parameterised by $\theta \in \Theta$. 
As the latter take values on \emph{sets} of outcomes, $A \subset \X$, of which singleton outcomes are mere special cases, they provide a natural setting for computing likelihoods of set-valued observations, in accordance with the random set philosophy.

When applied to samples generated by series of independent trials, under a generalisation of  stochastic independence, belief likelihoods factorise into simple products.
The resulting lower and upper likelihoods can be easily computed for series of Bernoulli trials,
and allows us to formulate a \emph{generalised logistic regression} framework, in which the mass values of individual trials are constrained to follow a logistic dependence on scalar parameters. The values of the parameters which optimise the lower and upper likelihoods induce a pair of `lower' and `upper' belief functions on the parameter space, whose interval effectively encodes the uncertainty associated with the amount of data at our disposal.
Every new observation, possibly in areas of the sample space not previously explored, is mapped to a pair of lower and upper logistic belief functions, which together provide lower and upper estimates for the belief values of each event. 

\subsection{Contributions}

The contributions of the paper are thus as follows:

(1) a belief likelihood function for repeated trials is defined, whenever the uncertainty on individual trials is assumed to be encoded by a belief measure;

(2) elegant factorisation properties are proven for events that are Cartesian products, whenever belief measures are combined by conjunctive rule, leading to the notions of lower and upper likelihoods;

(3) factorisation results are also provided in the case in which the dual, disjunctive combination is used to compute belief and plausibility likelihoods;

(4) analitical expressions of lower and upper likelihoods are provided for the case of Bernoulli trials;

(4) finally, a generalised logistic regression based on lower and upper likelihoods is formulated and analysed, as an alternative inference mechanism to generate belief functions from statistical data.

\subsection{Paper outline}

After reviewing in Section \ref{sec:logistic-regression} the logistic regression framework, we recall in Section \ref{sec:belief-functions} the necessary notions of the theory of belief functions. 
In Section \ref{sec:belief-likelihood} the belief likelihood function of repeated trials is defined. 
{In Section \ref{sec:belief-likelihood-conjunctive}, the belief likelihood of a series of binary trials is analysed in the conjunctive case. 
Factorisation results are shown 
which reduce upper and lower likelihoods of `sharp' samples to products of belief values of individual binary observations, and can be generalised to arbitrary Cartesian products of focal elements. 
In Section \ref{sec:belief-likelihood-disjunctive} an analysis of the belief likelihood function in the disjunctive case is conducted. 
General factorisation results holding for series of observations from arbitrary sample spaces are illustrated in Section \ref{sec:general}, while analytical expressions for the Bernoulli case are given in Section \ref{sec:bernoulli}.
}
Finally, a generalised logistic regression framework is outlined (Section \ref{sec:generalised-logistic}) in which the masses of the two outcomes are constrained to have a logistic dependency, and dual optimisation problems lead to a pair of lower and upper estimates for the belief measure of the outcomes.
Section \ref{sec:conclusions} concludes the paper and points at future work.

\section{Logistic regression} \label{sec:logistic-regression}

\emph{Logistic regression} allows us, given a sample $Y = \{Y_1,...,Y_n\}$, $X = \{x_1,...,x_n\}$ where $Y_i \in \{0,1\}$ is a binary outcome at time $i$ and $x_i$ is the corresponding observed measurement, to learn the parameters of a conditional probability relation between the two, of the form:
\begin{equation} \label{eq:logistic}
P(Y=1|x)={\frac {1}{1+e^{-(\beta _{0}+\beta _{1}x)}}},
\end{equation}
where $\beta_0$ and $\beta_1$ are two scalar parameters.
Given a new observation $x$, (\ref{eq:logistic}) delivers the probability of a positive outcome $Y=1$.
Logistic regression generalises deterministic linear regression, as it is a function of the linear combination $\beta _{0}+\beta _{1}x$.
The $n$ trials are assumed independent but not equally distributed, for $\pi_i = P(Y_i=1|x_i)$ varies with the time instant $i$ of collection.
The two scalar parameters $\beta_0,\beta_1$ in (\ref{eq:logistic}) are estimated by maximum likelihood of the sample. After denoting by
\begin{equation} \label{eq:logit}
\begin{array}{lll}
\pi_i & = & P(Y_i=1|x_i) = \displaystyle \frac{1}{1+e^{ - (\beta _{0}+\beta _{1}x_i) } }, 
\\ 
1 - \pi_i & = & P(Y_i=0|x_i) = \displaystyle \frac{e^{ - (\beta _{0}+\beta _{1} x_i) } }{1+e^{-(\beta _{0}+\beta _{1}x_i)} }
\end{array}
\end{equation}
the conditional probabilities of the two outcomes, the likelihood of the sample can be expressed as:
$
L(\beta|Y) = \prod_{i=1}^n \pi_i^{Y_i} (1-\pi_i)^{Y_i},
$
where $Y_i \in \{0,1\}$ and $\pi_i$ is a function of $\beta = [\beta_0,\beta_1]$. Maximising $L(\beta|Y)$ yields a conditional PDF $P(Y=1|x)$.
\\
Unfortunately, logistic regression shows clear limitations when the number of samples is insufficient or when there are too few positive outcomes (1s) \cite{King2001}. 
Moreover, inference by logistic regression tends to underestimate the probability of a positive outcome 
\cite{King2001}.

\section{Belief functions} \label{sec:belief-functions}

\subsection{Belief and plausibility measures} \label{sec:measures}

\begin{definition}\label{def:bpa}
A \emph{basic probability assignment} (BPA) \cite{Augustin96} over a finite domain $\Theta$ is a set function \cite{denneberg99interaction,dubois86set} $m : 2^\Theta\rightarrow[0,1]$ defined on the collection $2^\Theta$ of all subsets of $\Theta$ s.t.:
\[
m(\emptyset)=0, \; \sum_{A\subset\Theta} m(A)=1.
\]
\end{definition}
The quantity $m(A)$ is called the \emph{basic probability number} or `mass' \cite{kruse91tool,kruse91reasoning} assigned to $A$. 
The elements of the power set $2^\Theta$ associated with non-zero values of $m$ are called the \emph{focal elements} of $m$.
\begin{definition} \label{def:bel2}
The \emph{belief function} (BF) associated with a basic probability assignment $m : 2^\Theta\rightarrow[0,1]$ is the set function $Bel : 2^\Theta\rightarrow[0,1]$ defined as:
\begin{equation}\label{eq:belief} Bel(A) = \sum_{B\subseteq A} m(B). \end{equation}
\end{definition}
The domain $\Theta$ on which a belief function is defined is usually interpreted as the set of possible answers to a given problem, exactly one of which is the correct one. For each subset (`event') $A\subset \Theta$ the quantity $Bel(A)$ takes on the meaning of \emph{degree of belief} that the truth lies in $A$,
and represents the {total} belief committed to a set of possible outcomes $A$ by the available evidence $m$.

Another mathematical expression of the evidence generating a belief function $Bel$ 
is the \emph{upper probability} or \emph{plausibility} of an event $A$: $Pl(A) \doteq 1 - Bel(\bar{A})$,
as opposed to its \emph{lower probability} $Bel(A)$ \cite{cuzzolin10ida}. 
The {corresponding} \emph{plausibility function} $Pl : 2^\Theta \rightarrow [0,1]$ conveys the same information as $Bel$, and can be expressed as:
\[ Pl(A) = \sum_{B\cap A\neq \emptyset} m(B) \geq Bel(A). \]

\subsection{Evidence combination} \label{sec:combination}

The issue of combining the belief function representing our current knowledge state with a new one encoding the new evidence is central in belief theory. After an initial proposal by Dempster, several other aggregation operators have been proposed, based on different assumptions on the nature of the sources of evidence to combine.

\begin{definition} \label{def:dempster}
The \emph{orthogonal sum} or \emph{Dempster's combination} $Bel_1 \oplus Bel_2 : 2^\Theta \rightarrow [0,1]$ of two belief functions $Bel_1 : 2^\Theta \rightarrow [0,1]$, $Bel_2 : 2^\Theta \rightarrow [0,1]$ defined on the same domain $\Theta$ is the unique BF on $\Theta$ with as focal elements all the {non-empty} intersections of focal elements of $Bel_1$ and $Bel_2$, and basic probability assignment:
\begin{equation} \label{eq:dempster}
\displaystyle m_{\oplus}(A) = \frac{m_\cap(A)} {1- m_\cap(\emptyset)},
\end{equation}
where $m_i$ denotes the BPA of the input BF $Bel_i$, and:
$
m_\cap(A) = \sum_{B \cap C = A} m_1(B) m_2(C).
$
\end{definition}

Rather than normalising (as in (\ref{eq:dempster})), 
Smets' \emph{conjunctive rule} leaves the conflicting mass $m(\emptyset)$ with the empty set:
\begin{equation} \label{eq:combination-smets-conjunctive}
m_{\ocap}(A) = 
m_\cap (A) \quad \emptyset \subseteq A \subseteq \Theta, 
\end{equation}
and is thus applicable to `{unnormalised}' beliefs \cite{ubf}.

In Dempster's rule, consensus between two sources is expressed by the intersection of the supported events. When the \emph{union} 
is taken to express consensus we obtain the \emph{disjunctive} rule of combination \cite{Kramosil02probabilistic-analysis,YAMADA20081689}:
\begin{equation} \label{eq:combination-disjunctive}
m_{\ocup}(A) = \sum_{B \cup C = A} m_1(B) m_2(C),
\end{equation}
which yields more cautious inferences than conjunctive rules, by producing belief functions that are less `committed', i.e., have larger focal sets.
Under disjunctive combination: $Bel_1 \ocup Bel_2 (A) = Bel_1(A) \cdot Bel_2(A)$, input belief values are simply multiplied.

\subsection{Conditioning} \label{sec:conditioning}

Belief functions can also be conditioned, rather than combined, whenever we are presented hard evidence of the form `$A$ is true' 
\cite{Chateauneuf89,fagin91new,Jaffray92,gilboa93updating,denneberg94conditioning,yu94conditional,itoh95new}.

In particular,
Dempster's combination naturally induces a conditioning operator.
Given a conditioning event $A \subset \Theta$, the `logical' (or `categorical', in Smets' terminology) belief function $Bel_A$ such that $m(A)=1$ is combined via Dempster's rule with the a-priori belief function $Bel$. The resulting BF $Bel \oplus Bel_A$ is the {conditional belief function given $A$} \emph{a la Dempster}, denoted by $Bel_\oplus(A|B)$.

\subsection{Multivariate analysis} \label{sec:multivariate}

In many applications, we need to express uncertain information about a number of distinct variables (e.g., $X$ and $Y$) taking values in different domains ($\Theta_X$ and $\Theta_Y$, respectively). The reasoning process needs then to take place in the Cartesian product of the domains associated with each individual variable. 

Let then $\Theta_X$ and $\Theta_Y$ be two sample spaces associated with two distinct variables, and let $m^{XY}$ be a mass function on $\Theta_{XY} = \Theta_X \times \Theta_Y$.
The latter can be expressed in the coarser domain $\Theta_X$ by transferring each mass $m^{XY}(A)$ to the {projection} $A\downarrow \Theta_X$ of $A$ on $\Theta_X$.
We obtain a \emph{marginal} mass function on $\Theta_X$, denoted by: 
\[
m^{XY}_{\downarrow X}(B) \doteq \sum_{\{A \subseteq \Theta_{XY}, A\downarrow \Theta_X=B\}}  m^{XY}(A), \forall B \subseteq \Theta_X.
\]
Conversely, a mass function $m^X$ on $\Theta_X$ can be expressed in $\Theta_X\times \Theta_Y$  by transferring each mass $m^X(B)$ to the {cylindrical extension} $B^{\uparrow XY} \doteq B \times \Omega_Y$ of $B$.
The \emph{vacuous extension} of $m^X$ onto $\Theta_X\times \Theta_Y$ will then be:
\begin{equation} \label{eq:vacuous-extension}
m_X^{\uparrow XY}(A) \doteq
\begin{cases}
m^X(B) & \text{if } A=B \times \Omega_Y,\\
0 & \text{otherwise}.
\end{cases}
\end{equation}
The associated BF is denoted by $Bel_X^{\uparrow XY}$.

\section{Belief likelihood of repeated trials} \label{sec:belief-likelihood}

Let $Bel_{\X_i}(A|\theta)$, for $i=1,2,...,n$ be a parameterised family of belief functions on $\X_i$, the space of quantities that can be observed at time $i$, depending on a parameter $\theta \in \Theta$. A series of repeated trials then assumes values in $\X_1 \times \cdots \times \X_n$, whose elements are tuples of the form $\vec{x} = (x_1,...,x_n) \in \X_1 \times \cdots \times \X_n$.
We call such tuples `sharp' samples, as opposed to arbitrary subsets $A \subset \X_1 \times \cdots \times \X_n$ of the space of trials. Note that we are not assuming the trials to be equally distributed at this stage, nor we assume that they come from the same sample space.

\begin{definition} \label{def:belief-likelihood}
The belief likelihood function $Bel_{\X_1 \times \cdots \times \X_n} : 2^{\X_1 \times \cdots \times \X_n} \rightarrow [0,1]$ of a series of repeated trials is defined as:
\begin{equation} \label{eq:belief-likelihood}
Bel_{\X_1 \times \cdots \times \X_n}(A|\theta) \doteq Bel_{\X_1}^{\uparrow \times_i \X_i} \odot \cdots \odot Bel_{\X_n}^{\uparrow \times_i \X_i} (A|\theta),
\end{equation}
where $ Bel_{\X_j}^{\uparrow \times_i \X_i}$ is the vacuous extension (\ref{eq:vacuous-extension}) of $Bel_{\X_j}$ to the Cartesian product $\X_1 \times \cdots \times \X_n$ where the observed tuples live, and $\odot$ is an arbitrary combination rule.
\end{definition}
In particular, when the subset $A$ reduces to a sharp sample, $A = \{\vec{x}\}$, we can define the following generalisations of the notion of likelihood. 
\begin{definition} \label{def:lower-upper-likelihoods}
We call the quantities 
\begin{equation} \label{eq:lower-upper-likelihoods}
\begin{array}{l}
\underline{L}(\vec{x}) \doteq Bel_{\X_1 \times \cdots \times \X_n}(\{(x_1,...,x_n)\}|\theta),
\\ \\
\overline{L}(\vec{x}) \doteq Pl_{\X_1 \times \cdots \times \X_n}(\{(x_1,...,x_n)\}|\theta) 
\end{array}
\end{equation}
\emph{lower likelihood} and \emph{upper likelihood}, respectively, of $A = \{ \vec{x} \} = \{(x_1,...,x_n)\}$.
\end{definition}

\section{Binary trials: the conjunctive case} \label{sec:belief-likelihood-conjunctive}

Belief likelihoods factorise into simple products, whenever conjuctive combination is employed (as a generalisation of classical stochastic independence) in Definition \ref{def:belief-likelihood}, and trials with binary outcomes are considered.

\subsection{Focal elements of the belief likelihood} \label{sec:focal-elements-conjunctive}

Let us first analyse the case $n=2$. We seek the Dempster's sum $Bel_{\X_1} \oplus Bel_{\X_2}$, where $\X_1 = \X_2 = \{T,F\}$.
\\
Figure \ref{fig:casen2} is a diagram of all the intersections of focal elements of the two input BF on  $\X_1 \times \X_2$.
\begin{figure}[ht!]
\begin{center}
\includegraphics[width = 0.32\textwidth]{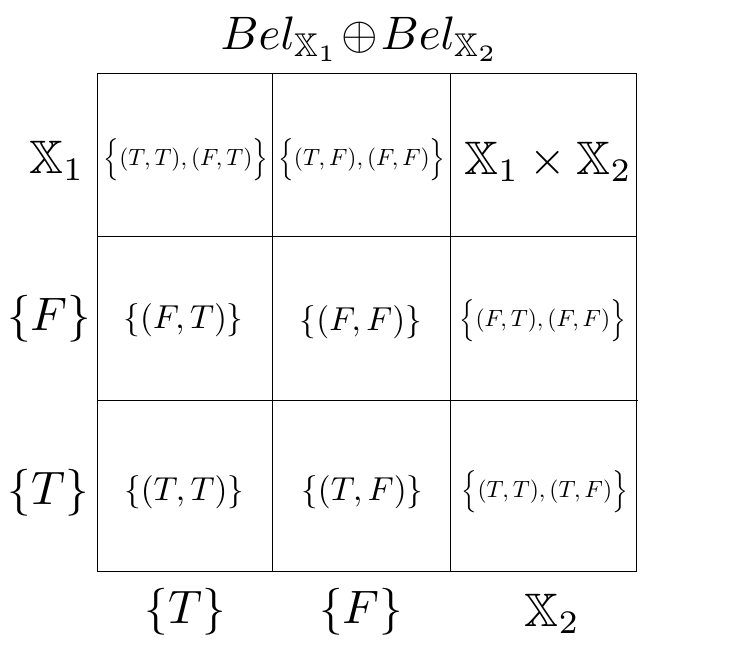}
\end{center}
\caption{Graphical representation of Dempster's combination $Bel_{\X_1} \oplus Bel_{\X_2}$ on $\X_1 \times \X_2$.} \label{fig:casen2}
\end{figure}
There are $9 = 3^2$ distinct, non-empty intersections, which correspond to the focal elements of $Bel_{\X_1} \oplus Bel_{\X_2}$. According to Equation (\ref{eq:dempster}), the mass of focal element $A_1 \times A_2$, $A_1 \subseteq \X_1$, $A_2 \subseteq \X_2$, is then:
\begin{equation} \label{eq:lemma-n2}
m_{Bel_{\X_1} \oplus Bel_{\X_2}} (A_1 \times A_2) = m_{{\X_1}} (A_1) \cdot m_{{\X_2}} (A_2).
\end{equation}
Note that the result holds when using the conjunctive rule $\ocap$ as well (\ref{eq:combination-smets-conjunctive}), for none of the intersections is empty, hence no normalisation is required. Nothing is assumed about the mass assignment of $Bel_{\X_1}$ and $Bel_{\X_2}$. 

We can now prove the following Lemma.
\begin{lemma} \label{lem:factorisation-conjunctive}
For any $n \in \mathbb{Z}$ the belief function $Bel_{\X_1} \oplus \cdots \oplus Bel_{\X_n}$, where $\X_i = \X = \{T,F\}$, has $3^n$ focal elements, namely all possible Cartesian products $A_1\times ... \times A_n$ of $n$ non-empty subsets $A_i$ of $\X$, with BPA:
\[
m_{Bel_{\X_1} \oplus \cdots \oplus Bel_{\X_n}} (A_1\times ... \times A_n) = \prod_{i=1}^n m_{\X_i}(A_i).
\]
\end{lemma}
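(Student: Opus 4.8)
The plan is to argue by induction on $n$, using associativity of Dempster's rule together with the two-factor computation already carried out for $n=2$. The base case $n=1$ is trivial, and $n=2$ is exactly the content of Figure~\ref{fig:casen2} and Equation~(\ref{eq:lemma-n2}): the $3^2$ non-empty intersections of focal elements are precisely the products $A_1\times A_2$ with $A_i\in\{\{T\},\{F\},\{T,F\}\}$, carrying mass $m_{\X_1}(A_1)\,m_{\X_2}(A_2)$.

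For the inductive step, assume $Bel^{(n-1)} \doteq Bel_{\X_1}\oplus\cdots\oplus Bel_{\X_{n-1}}$ has as focal elements all Cartesian products $A_1\times\cdots\times A_{n-1}$ of non-empty subsets $A_i\subseteq\X$, with mass $\prod_{i=1}^{n-1} m_{\X_i}(A_i)$. By associativity of $\oplus$, $Bel_{\X_1}\oplus\cdots\oplus Bel_{\X_n} = Bel^{(n-1)}\oplus Bel_{\X_n}$, where both arguments are read via their vacuous extensions to $\X_1\times\cdots\times\X_n$. The focal elements of the vacuously extended $Bel^{(n-1)}$ are the sets $(A_1\times\cdots\times A_{n-1})\times\X_n$, those of the vacuously extended $Bel_{\X_n}$ are $(\X_1\times\cdots\times\X_{n-1})\times A_n$, each retaining the corresponding mass; intersecting one of each yields $A_1\times\cdots\times A_n$, non-empty since every $A_i$ is, and this reduces the $n$-fold case to a single application of the combination formula to two sets of $3^{n-1}$ and $3$ focal elements.

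Two points then close the argument. First, no intersection of focal elements is empty, so $m_\cap(\emptyset)=0$, the normalisation in~(\ref{eq:dempster}) is vacuous, Dempster's rule agrees with the conjunctive rule $\ocap$, and no mass is lost. Second, a non-empty set admits a \emph{unique} decomposition as a Cartesian product $A_1\times\cdots\times A_n$ of non-empty factors, each $A_i$ being recovered as the projection onto the $i$-th coordinate; hence distinct tuples $(A_1,\dots,A_n)$ give distinct focal elements and no two product terms in $m_\cap$ collapse onto the same set. Consequently $m_\cap(A_1\times\cdots\times A_n)$ has the single term $m_{Bel^{(n-1)}}(A_1\times\cdots\times A_{n-1})\cdot m_{\X_n}(A_n)=\prod_{i=1}^n m_{\X_i}(A_i)$, there are exactly $3\cdot 3^{n-1}=3^n$ of them, and the induction is complete. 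The only step needing care is this uniqueness-of-factorisation bookkeeping, which is what simultaneously pins down the count $3^n$ and ensures the masses multiply rather than accumulate; the rest is a direct unrolling of the combination rule.
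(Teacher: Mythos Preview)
Your proof is correct and follows essentially the same inductive argument as the paper: both extend vacuously to the full product, intersect cylinders to obtain $A_1\times\cdots\times A_n$, note that no intersection is empty (so no normalisation), and observe distinct tuples give distinct products. Your version is in fact slightly more explicit, since you justify the distinctness claim via uniqueness of Cartesian factorisation of non-empty sets, whereas the paper simply asserts it.
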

\begin{proof}
The proof is by induction.
The thesis was shown to be true for $n=2$ in Equation (\ref{eq:lemma-n2}).
In the induction step, we assume that the thesis is true for $n$, and prove it for $n+1$.
If $Bel_{\X_1} \oplus \cdots \oplus Bel_{\X_{n}}$, defined on $\X_1 \times \cdots \times \X_{n}$, has as focal elements the $n$-products $A_1\times ... \times A_n$ with $A_i \in \big \{ \{T\}, \{F\}, \X \big \}$ for all $i$, its vacuous extension to $\X_1 \times \cdots \times \X_n \times \X_{n+1}$ will have as focal elements the
$n+1$-products of the form: $A_1\times ... \times A_n \times \X_{n+1}$,
with $A_i \in \big \{ \{T\}, \{F\}, \X \big \}$ for all $i$.

The belief function $Bel_{\X_{n+1}}$ is defined on $\X_{n+1} = \X$, with three focal elements: $\{T\}$, $\{F\}$ and $\X = \{T,F\}$. Its vacuous extension to $\X_1 \times \cdots \times \X_n \times \X_{n+1}$ thus has the following three focal elements:
$\X_1 \times \cdots \times \X_n \times \{T\}$, 
$\X_1 \times \cdots \times \X_n \times \{F\}$ and 
$\X_1 \times \cdots \times \X_n \times \X_{n+1}$.
\\
When computing $(Bel_{\X_1} \oplus \cdots \oplus Bel_{\X_{n}}) \oplus Bel_{\X_{n+1}}$ on the common refinement $\X_1 \times \cdots \times \X_n \times \X_{n+1}$ we need to compute the intersection of their focal elements, namely:
\[
\begin{array}{c}
\big ( A_1\times ... \times A_n \times \X_{n+1} \big ) \cap \big ( \X_1 \times \cdots \times \X_n \times A_{n+1} \big )
\\
=
A_1\times ... \times A_n \times A_{n+1}
\end{array}
\]
for all $A_{i} \subseteq \X_{i}$, $i=1,...,n+1$. All such intersections are distinct for distinct focal elements of the two belief functions to combine, and there are no empty intersection. By Dempster's rule (\ref{eq:dempster}) their mass is equal to the product of the original masses, i.e.:
\[
\begin{array}{c}
m_{Bel_{\X_1} \oplus \cdots \oplus Bel_{\X_{n+1}}}(A_1\times ... \times A_n \times A_{n+1}) =
\\
m_{Bel_{\X_1} \oplus \cdots \oplus Bel_{\X_n}}(A_1\times ... \times A_n) \cdot m_{Bel_{\X_{n+1}}}(A_{n+1}).
\end{array}
\]
Since we assumed that the factorisation holds for $n$, the thesis easily follows. 
\end{proof}

As no normalisation is involved in the combination $Bel_{\X_1} \oplus \cdots \oplus Bel_{\X_n}$, Dempster's rule coincides with the conjunctive rule and Lemma \ref{lem:factorisation-conjunctive} holds for $\ocap$ as well.

\subsection{Factorisation for `sharp' tuples} \label{sec:factorisation-conjunctive-sharp}

The following becomes then a simple corollary.
\begin{theorem} \label{the:belief-likelihood-decomposition-ocap}
When using either $\ocap$ or $\oplus$ as a combination rule in the definition of belief likelihood function, the following decomposition holds for tuples $(x_1,...,x_n)$, $x_i \in \X_i$, which are the singletons elements of $\X_1 \times \cdots \times \X_n$, with $\X_1 = ... = \X_n = \{T,F\}$:
\begin{equation} \label{eq:belief-likelihood-decomposition-ocap}
Bel_{\X_1 \times \cdots \times \X_n}(\{(x_1,...,x_n)\}|\theta) = 
\prod_{i=1}^n Bel_{\X_i} (\{x_i\}|\theta),
\end{equation}
\end{theorem}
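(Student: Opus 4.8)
The plan is to derive Theorem~\ref{the:belief-likelihood-decomposition-ocap} as an immediate consequence of Lemma~\ref{lem:factorisation-conjunctive}, by applying the definition of belief function (Definition~\ref{def:bel2}) to the combined BPA. First I would fix a sharp tuple $(x_1,\dots,x_n)$ with $x_i \in \X_i = \{T,F\}$, and note that by Definition~\ref{def:belief-likelihood} with $\odot = \ocap$ (equivalently $\oplus$, since no normalisation occurs) the quantity $Bel_{\X_1 \times \cdots \times \X_n}(\{(x_1,\dots,x_n)\}|\theta)$ equals $\sum_{B \subseteq \{(x_1,\dots,x_n)\}} m_{Bel_{\X_1} \oplus \cdots \oplus Bel_{\X_n}}(B)$. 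The only subsets of the singleton $\{(x_1,\dots,x_n)\}$ are $\emptyset$ and the singleton itself, and the BPA vanishes on $\emptyset$, so the sum collapses to $m_{Bel_{\X_1} \oplus \cdots \oplus Bel_{\X_n}}(\{(x_1,\dots,x_n)\})$.

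Next I would observe that the singleton $\{(x_1,\dots,x_n)\}$ is precisely the Cartesian product $\{x_1\} \times \cdots \times \{x_n\}$, which is one of the $3^n$ focal elements enumerated in Lemma~\ref{lem:factorisation-conjunctive} (taking $A_i = \{x_i\}$, a nonempty subset of $\X$). Lemma~\ref{lem:factorisation-conjunctive} therefore gives $m_{Bel_{\X_1} \oplus \cdots \oplus Bel_{\X_n}}(\{x_1\} \times \cdots \times \{x_n\}) = \prod_{i=1}^n m_{\X_i}(\{x_i\})$. Finally, since $\{x_i\}$ is a singleton, the only subset contributing to $Bel_{\X_i}(\{x_i\}|\theta)$ is $\{x_i\}$ itself, so $Bel_{\X_i}(\{x_i\}|\theta) = m_{\X_i}(\{x_i\})$; substituting this into the product yields the claimed identity $\prod_{i=1}^n Bel_{\X_i}(\{x_i\}|\theta)$.

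There is essentially no obstacle here — the theorem really is a corollary, as the text announces. The only points requiring a moment of care are bookkeeping ones: checking that the vacuous extension of each $Bel_{\X_j}$ still has $\{x_j\}$ (lifted as $\X_1 \times \cdots \times \{x_j\}_{\text{slot }j} \times \cdots \times \X_n$) among its focal elements with the same mass, which is exactly what makes Lemma~\ref{lem:factorisation-conjunctive} applicable to the vacuously-extended BFs appearing in Definition~\ref{def:belief-likelihood}; and noting that the equality of belief and mass on singletons is what lets the individual factors be written as $Bel_{\X_i}$ rather than $m_{\X_i}$. If one wanted to be thorough one could also remark that the argument never uses $n \le 2$, so it holds for all $n \in \mathbb{Z}^+$, matching the scope of the lemma.
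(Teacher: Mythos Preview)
Your proposal is correct and follows essentially the same route as the paper's own proof: both reduce $Bel$ on the singleton tuple to its mass via Definition~\ref{def:bel2}, invoke Lemma~\ref{lem:factorisation-conjunctive} to factorise that mass as $\prod_i m_{\X_i}(\{x_i\})$, and then identify each factor with $Bel_{\X_i}(\{x_i\})$ because mass and belief agree on singletons. Your write-up is simply a slightly more explicit version of the paper's two-line argument.
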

\begin{proof}
For the singleton elements of $\X_1 \times \cdots \times \X_n$,
since $\{(x_1,...,x_n)\} = \{x_1\} \times ... \times \{x_n\}$, Equation (\ref{eq:belief-likelihood-decomposition-ocap}) becomes: 
$Bel_{\X_1 \times \cdots \times \X_n}(\{(x_1,...,x_n)\})
= m_{Bel_{\X_1} \oplus \cdots \oplus Bel_{\X_n}}(\{ (x_1,...,x_n) \}) 
= \prod_{i=1}^n m_{\X_i}(\{x_i \}) = \prod_{i=1}^n Bel_{\X_i} (\{x_i\})$,
where the mass factorisation follows from Lemma \ref{lem:factorisation-conjunctive},
as on singletons mass and belief values coincide. 
\end{proof}

There is evidence to support the following as well.
\begin{conjecture} \label{con:plausibility-likelihood-decomposition-ocap}
When using either \ocap$\hspace{0.3mm}$ or $\oplus$ as a combination rule in the definition of belief likelihood function, the following decomposition holds for the associated plausibility values on tuples $(x_1,...,x_n)$, $x_i \in \X_i$, which are the singletons elements of $\X_1 \times \cdots \times \X_n$, with $\X_1 = ... = \X_n = \{T,F\}$:
\begin{equation} \label{eq:plausibility-likelihood-decomposition-ocap}
\begin{array}{c}
\displaystyle
Pl_{\X_1 \times \cdots \times \X_n}(\{(x_1,...,x_n)\}|\theta) = \prod_{i=1}^n Pl_{\X_i} (\{x_i\}|\theta).
\end{array}
\end{equation}
\end{conjecture}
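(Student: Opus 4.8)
The plan is to derive the statement from Lemma~\ref{lem:factorisation-conjunctive} together with the definition $Pl(A)=\sum_{B\cap A\neq\emptyset}m(B)$, in the same spirit in which Theorem~\ref{the:belief-likelihood-decomposition-ocap} follows from the mass factorisation. The only genuinely new ingredient is that, unlike a singleton's belief value, a singleton's plausibility is not a single mass but a sum of masses over all focal elements that meet it, so the argument requires one extra combinatorial step.

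First I would note that, since $\X_1=\cdots=\X_n=\{T,F\}$ and no intersection of focal elements is empty in $Bel_{\X_1}\oplus\cdots\oplus Bel_{\X_n}$ (as observed in the proof of Lemma~\ref{lem:factorisation-conjunctive}), the rule $\ocap$ coincides with $\oplus$ here, so $Pl_{\X_1\times\cdots\times\X_n}$ is unambiguous, and by Lemma~\ref{lem:factorisation-conjunctive} its focal elements are exactly the products $A_1\times\cdots\times A_n$ with each $A_i\in\{\{T\},\{F\},\X\}$ and mass $\prod_{i=1}^n m_{\X_i}(A_i)$. Next I would observe that a product focal element $A_1\times\cdots\times A_n$ meets the sharp sample $\{(x_1,\dots,x_n)\}$ if and only if $x_i\in A_i$ for every $i$, i.e. if and only if $A_i\in\{\{x_i\},\X\}$ for every $i$ (the complementary singleton $\{\bar{x}_i\}$ does not contain $x_i$). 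Hence the set of focal elements contributing to $Pl_{\X_1\times\cdots\times\X_n}(\{(x_1,\dots,x_n)\})$ is precisely the Cartesian product $\prod_{i=1}^n\big\{\{x_i\},\X\big\}$ of per-coordinate admissible choices, with no residual global constraint.

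With this in hand, writing
\[
Pl_{\X_1\times\cdots\times\X_n}(\{(x_1,\dots,x_n)\})
=\sum_{(A_1,\dots,A_n)\in\prod_i\{\{x_i\},\X\}}\ \prod_{i=1}^n m_{\X_i}(A_i),
\]
distributivity of multiplication over addition factorises the right-hand side into $\prod_{i=1}^n\big(m_{\X_i}(\{x_i\})+m_{\X_i}(\X)\big)$. Finally, since the focal elements of $Bel_{\X_i}$ meeting $\{x_i\}$ are exactly $\{x_i\}$ and $\X$, one has $m_{\X_i}(\{x_i\})+m_{\X_i}(\X)=Pl_{\X_i}(\{x_i\})$, which yields (\ref{eq:plausibility-likelihood-decomposition-ocap}).

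The step requiring the most care --- the `main obstacle' --- is the combinatorial bookkeeping just described: one must verify both that the family of focal elements meeting the sample is a genuine Cartesian product of independent per-coordinate choices and that every such choice is actually realised as a focal element of the combined BF. This is exactly where the absence of conflict (and hence of normalisation) in Lemma~\ref{lem:factorisation-conjunctive} is indispensable: had normalisation pruned or rescaled some of the products, the clean factorisation would break. Once this is settled the remaining algebra is routine, which is why I would expect the conjecture to be provable as a theorem, and --- replacing $\{\{x_i\},\X\}$ by $\{A_i\subseteq\X_i:x_i\in A_i\}$ --- to extend both to non-binary sample spaces and, with sharp tuples replaced by Cartesian products of focal elements, along the lines anticipated for Section~\ref{sec:general}.
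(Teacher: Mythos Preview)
Your argument is correct and in fact stronger than what the paper achieves. The paper treats the statement as a genuine conjecture: it rewrites $Pl_{\X_1\times\cdots\times\X_n}(\{(x_1,\dots,x_n)\})$ as $1-Bel_{\X_1\times\cdots\times\X_n}(\{(x_1,\dots,x_n)\}^c)$, attempts to enumerate the focal elements contained in the complement, and is only able to close the computation via a double binomial-sum identity under the additional hypotheses that the $Bel_{\X_i}$ are equally distributed and that the tuple is $(T,\dots,T)$. Your route is genuinely different and more elementary: by working directly with $Pl(A)=\sum_{B\cap A\neq\emptyset}m(B)$ you turn the global condition ``$B$ meets the singleton'' into the coordinate-wise condition $x_i\in A_i$, so the index set of contributing focal elements is itself a Cartesian product and the mass factorisation of Lemma~\ref{lem:factorisation-conjunctive} yields the result by plain distributivity. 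This buys you the full statement (arbitrary tuple, non-identically-distributed $Bel_{\X_i}$) with no special-case restriction, and, as you note, the same argument carries over verbatim to arbitrary finite $\X_i$ via Theorem~\ref{the:factorisation-conjunctive}. The paper's complement-based approach offers no compensating advantage here; your observation that the ``meets the singleton'' condition has product structure is exactly the insight the paper is missing.
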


Indeed we can write: $Pl_{\X_1 \times \cdots \times \X_n}(\{(x_1,...,x_n)\}) =$
\begin{equation} \label{eq:plausibilities-singletons}
\begin{array}{l}
\displaystyle
= 1 - Bel_{\X_1 \times \cdots \times \X_n}(\{(x_1,...,x_n)\}^c) = \\
\displaystyle
= 1 - \sum_{B \subseteq \{(x_1,...,x_n)\}^c} m_{Bel_{\X_1} \oplus \cdots \oplus Bel_{\X_n}}(B).
\end{array}
\end{equation}
By Lemma \ref{lem:factorisation-conjunctive} all the subsets $B$ with non-zero mass are Cartesian products of the form $A_1 \times ... \times A_n$, $\emptyset \neq A_i \subseteq \X_i$.
We then need understand the nature of the focal elements of $Bel_{\X_1 \times \cdots \times \X_n}$ which are subsets of an arbitrary singleton complement $\{(x_1,...,x_n)\}^c$.

For binary spaces $\X_i = \X = \{T,F\}$, by definition of Cartesian product, each such $B = A_1 \times ... \times A_n \subseteq \{(x_1,...,x_n)\}^c$ is obtained by replacing a number $1 \leq k \leq n$ of components of the tuple $(x_1,...,x_n) = \{x_1\} \times ... \times \{x_n\}$ with a different subset of $\X_i$ (either $\{{x}_i\}^c = \X_i \setminus \{x_i\}$ or $\X_i$). There are $\binom{n}{k}$ such sets of $k$ components in a list of $n$.
Of these $k$ components, in general $1 \leq m \leq k$ will be replaced by $\{{x}_i\}^c$, while the other $1 \leq k-m < k$ will be replaced by $\X_i$. Note that not all $k$ components can be replaced by $\X_i$, since the resulting focal element would contain the tuple $\{(x_1,...,x_n)\} \in \X_1 \times ... \times \X_n$.

The following argument can be proved for $(x_1,...,x_n) = (T,...,T)$, under the additional assumption that $Bel_{\X_1} \cdots Bel_{\X_n}$ are equally distributed with $p \doteq Bel_{\X_i}(\{T\})$, $q \doteq Bel_{\X_i}(\{F\})$ and $r \doteq Bel_{\X_i}(\X_i)$.
\\
If this is the case, for fixed values of $m$ and $k$ all the resulting focal elements have the same mass value, namely: $p^{n-k} q^m r^{k-m}$, where $p \doteq Bel_{\X_i}(\{T\})$, $q \doteq Bel_{\X_i}(\{F\})$ and $r \doteq Bel_{\X_i}(\X_i)$. As there are exactly $\binom{k}{m}$ such focal elements, (\ref{eq:plausibilities-singletons}) can be written as:
\[
1 - \sum_{k=1}^n \binom{n}{k} \sum_{m=1}^k \binom{k}{m} p^{n-k} q^{m} r^{k-m}.
\]
which can be rewritten as:
\[
1 - \sum_{m=1}^n q^m \sum_{k=m}^n \binom{n}{k} \binom{k}{m} p^{n-k} r^{k-m}.
\]
A change of variable $l=n-k$, where $l=0$ when $k=n$, $l=n-m$ when $k=m$, allows us to write it as:
\[
1 - \sum_{m=1}^n q^m \sum_{l=0}^{n-m} \binom{n}{n-l} \binom{n-l}{m} p^{l} 
r^{(n-m)-l},
\]
since $k-m = n-l-m$, $k=n-l$. Now, as
\[
\binom{n}{n-l} \binom{n-l}{m} = \binom{n}{m} \binom{n-m}{l}
\]
we obtain:
$\displaystyle
1 - \sum_{m=1}^n q^m \binom{n}{m} \sum_{l=0}^{n-m} \binom{n-m}{l} p^{l} 
r^{(n-m)-l}.
$
By Newton's binomial, the latter is equal to
$\displaystyle
1 - \sum_{m=1}^n q^m \binom{n}{m} (1-q)^{n-m},
$
since $1 - q = p + r$.
Again, by Newton's binomial, we get:
\[
\begin{array}{l}
Pl_{\X_1 \times \cdots \times \X_n}(\{(T,...,T)\}) = 1 - [1 - (1-q)^n ] 
\\
\hspace{35mm} = \displaystyle (1 - q)^n = \prod_{i=1}^n Pl_{\X_i}(\{ T\}).
\end{array}
\]

\subsection{Factorisation for Cartesian products} \label{sec:factorisation-conjunctive-cartesian}

Decomposition (\ref{eq:belief-likelihood-decomposition-ocap}) is equivalent to what Smets calls \emph{conditional conjunctive independence} \cite{smets93belief}. 
In fact, for binary spaces factorisation (\ref{eq:belief-likelihood-decomposition-ocap}) generalises to all subsets $A \subseteq \X_1 \times \cdots \times \X_n$ of samples which are Cartesian products of subsets of $\X_1, ..., \X_n$, respectively: $A = A_1 \times \cdots \times A_n$, $A_i \subseteq \X_i$ for all $i$.

\begin{corollary} \label{cor:cci}
Whenever $A_i \subseteq \X_i = \{ T, F \}$, $i=1,...,n$, under conjunctive combination we have that:
\begin{equation} \label{eq:corollary}
Bel_{\X_1 \times \cdots \times \X_n}(A_1 \times \cdots \times A_n | \theta) = 
\prod_{i=1}^n Bel_{\X_i} (A_i | \theta).
\end{equation}
\end{corollary}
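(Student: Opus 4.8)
The plan is to express the belief value $Bel_{\X_1 \times \cdots \times \X_n}(A_1 \times \cdots \times A_n \mid \theta)$ using its mass decomposition from Lemma \ref{lem:factorisation-conjunctive} and then reorganise the resulting sum as a product. By Lemma \ref{lem:factorisation-conjunctive}, the combined belief function on $\X_1 \times \cdots \times \X_n$ has as focal elements exactly the Cartesian products $B_1 \times \cdots \times B_n$ with $\emptyset \neq B_i \subseteq \X_i$, and mass $\prod_{i=1}^n m_{\X_i}(B_i)$. Hence
\[
Bel_{\X_1 \times \cdots \times \X_n}(A_1 \times \cdots \times A_n \mid \theta) = \sum_{\substack{B_1 \times \cdots \times B_n \\ \subseteq A_1 \times \cdots \times A_n}} \prod_{i=1}^n m_{\X_i}(B_i \mid \theta).
\]

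The key step is the observation that, for non-empty subsets, $B_1 \times \cdots \times B_n \subseteq A_1 \times \cdots \times A_n$ holds \emph{if and only if} $B_i \subseteq A_i$ for every $i$. The forward direction is immediate by projecting onto each coordinate; the reverse direction is immediate from the definition of Cartesian product. This equivalence lets me rewrite the constraint on the summation index as a conjunction of independent per-coordinate constraints, so the sum over tuples $(B_1,\dots,B_n)$ factorises:
\[
\sum_{\substack{B_1 \times \cdots \times B_n \\ \subseteq A_1 \times \cdots \times A_n}} \prod_{i=1}^n m_{\X_i}(B_i \mid \theta) = \prod_{i=1}^n \Bigg( \sum_{\emptyset \neq B_i \subseteq A_i} m_{\X_i}(B_i \mid \theta) \Bigg) = \prod_{i=1}^n Bel_{\X_i}(A_i \mid \theta),
\]
where the last equality is just Definition \ref{def:bel2} (the $\emptyset$ term contributes nothing since $m_{\X_i}(\emptyset) = 0$). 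Since no normalisation intervenes in $Bel_{\X_1} \oplus \cdots \oplus Bel_{\X_n}$, the identity holds verbatim for $\ocap$ as well, which gives the claim.

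I do not expect a genuine obstacle here: the statement is, as the text says, a corollary, and the only thing to be careful about is the ``if and only if'' characterisation of containment of Cartesian products, which requires all the $B_i$ (and $A_i$) to be non-empty — but focal elements are non-empty by Lemma \ref{lem:factorisation-conjunctive}, and if some $A_i = \emptyset$ then both sides are zero, so the edge case is harmless. One should also note that the restriction to binary $\X_i = \{T,F\}$ is inherited purely from Lemma \ref{lem:factorisation-conjunctive}; the factorisation argument itself does not use it, so the corollary would extend to any finite $\X_i$ for which the analogue of Lemma \ref{lem:factorisation-conjunctive} holds.
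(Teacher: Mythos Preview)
Your proof is correct and shares its opening move with the paper: both invoke Lemma~\ref{lem:factorisation-conjunctive} to reduce the belief value to a sum of mass products over Cartesian-product focal elements, and both identify the containment $B_1\times\cdots\times B_n\subseteq A_1\times\cdots\times A_n$ with the componentwise condition $B_i\subseteq A_i$. Where you diverge is in the simplification that follows. You apply the distributive law directly to factorise the sum into $\prod_i\sum_{B_i\subseteq A_i}m_{\X_i}(B_i)$ and recognise each factor as $Bel_{\X_i}(A_i)$ by definition. The paper instead performs a case split: it separates the indices $i$ for which $A_i$ is a singleton (forcing $B_i=A_i$) from those for which $A_i=\X_i$, and then argues that the residual sum over the latter indices equals~$1$ because it enumerates all possible mass products. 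Your route is shorter and, as you observe, does not actually use the hypothesis $\X_i=\{T,F\}$; the paper's case analysis leans on binarity to exhaust the possibilities for $A_i$, which is why the corollary is stated only for binary frames even though the underlying fact (given Theorem~\ref{the:factorisation-conjunctive}) holds for arbitrary finite $\X_i$.
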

\begin{proof}
As by Lemma \ref{lem:factorisation-conjunctive} all the focal elements of $Bel_{\X_1} \ocap \cdots \ocap Bel_{\X_n}$ are Cartesian products of the form $B = B_1 \times \cdots \times B_n$, $B_i \subseteq \X_i$, it follows that $Bel_{\X_1 \times \cdots \times \X_n}(A_1 \times \cdots \times A_n | \theta)$ is equal to: 
\[
\sum_{B \subseteq A_1 \times \cdots \times A_n, B = B_1 \times \cdots \times B_n}
m_{\X_1}(B_1) \cdot ... \cdot m_{\X_n}(B_n).
\] 
But 
$
\{
B \subseteq A_1 \times \cdots \times A_n, B = B_1 \times \cdots \times B_n
\}
=
\{
B = B_1 \times \cdots \times B_n, B_i \subseteq A_i \forall i
\},
$
since if $B_j \not \subset A_j$ for some $j$ the resulting Cartesian product would not be a subset of $A_1 \times \cdots \times A_n$. 
Thus, $Bel_{\X_1 \times \cdots \times \X_n}(A_1 \times \cdots \times A_n | \theta)
=$
\begin{equation} \label{eq:aux-page6}
= \sum_{B = B_1 \times \cdots \times B_n, B_i \subseteq A_i \forall i}
m_{\X_1}(B_1) \cdot ... \cdot m_{\X_n}(B_n). 
\end{equation}
For all $A_i$'s, $i=i_1,...,i_m$ that are singletons of $\X_i$, necessarily $B_i = A_i$ and we can write (\ref{eq:aux-page6}) as:
\[
m(A_{i_1}) \cdot ... \cdot m(A_{i_m}) 
\sum_{B_j \subseteq A_j, j \neq i_1,...,i_m} \prod_{j \neq i_1,...,i_m} m_{\X_j}(B_j).
\]
If the frames are binary, $\X_i = \{T,F\}$, those $A_i$'s that are not singletons coincide with $\X_i$, so that we have:
\[
m(A_{i_1}) \cdot ... \cdot m(A_{i_m}) 
\sum_{B_j \subseteq \X_j, j \neq i_1,...,i_m} \prod_{j
} m_{\X_j}(B_j).
\]
The quantity $\displaystyle \sum_{B_j \subseteq \X_j, j \neq i_1,...,i_m} \prod_{j} m_{\X_j}(B_j)$ is, according to the definition of conjunctive combination, the sum of the masses of all the possible intersections of (cylindrical extensions of) focal elements of $Bel_{\X_j}$, $j \neq i_1,...,i_m$, thus they add up to 1.
In conclusion (forgetting the conditioning on $\theta$ in the derivation for sake of readability):
$
Bel_{\X_1 \times \cdots \times \X_n}(A_1 \times \cdots \times A_n) =  m(A_{i_1}) \cdot ... \cdot m(A_{i_m}) \cdot 1 \cdot ... \cdot 1 = Bel_{\X_{i_1}}(A_{i_1}) \cdot ... \cdot Bel_{\X_{i_m}}(A_{i_m}) \cdot Bel_{\X_{j_1}}(\X_{j_1}) \cdot ... \cdot Bel_{\X_{j_k}}(\X_{j_k}) =  
Bel_{\X_{i_1}}(A_{i_1}) \cdot ... \cdot Bel_{\X_{i_m}}(A_{i_m}) \cdot Bel_{\X_{j_1}}(A_{j_1}) \cdot ... \cdot Bel_{\X_{j_k}}(A_{j_k})
$
and we have (\ref{eq:corollary}).
\end{proof}

Corollary \ref{cor:cci} states that conditional conjunctive independence always holds for events that are Cartesian products, whenever the involved frames are binary. 

\section{Binary trials: the disjunctive case} \label{sec:belief-likelihood-disjunctive}

Similar factorisation results hold when using the (more cautious) disjunctive combination $\ocup$.

\subsection{Structure of the focal elements}

As in the conjunctive case, we first analyse the case $n=2$.
We seek the disjunctive combination $Bel_{\X_1} \ocup Bel_{\X_2}$, where each $Bel_{\X_i}$ has as focal elements $\{T\}$, $\{F\}$ and $\X_i$.
Figure \ref{fig:casen2-ocup} is a diagram of all the unions of focal elements of the two input BFs on their common refinement $\X_1 \times \X_2$.
\begin{figure}[ht!]
\begin{center}
\includegraphics[width = 0.32\textwidth]{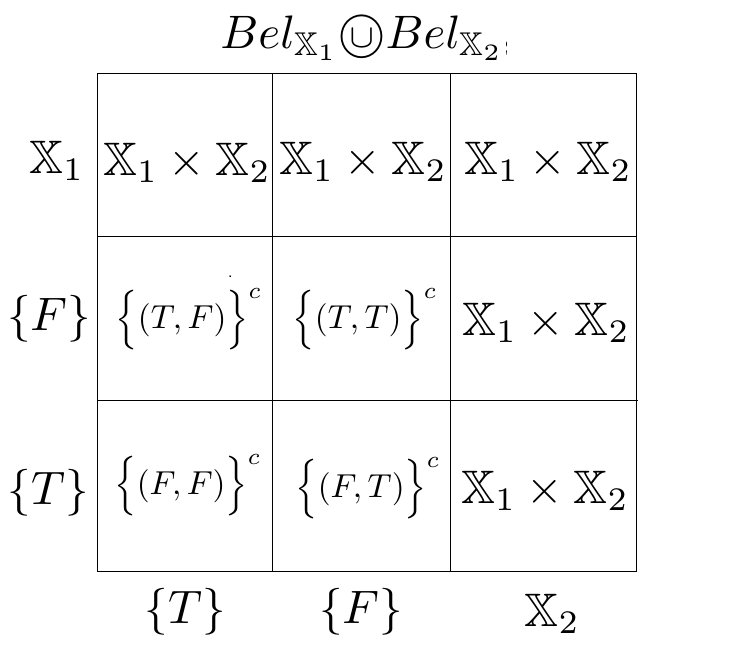}
\end{center}
\caption{Graphical representation of the disjunctive combination $Bel_{\X_1} \ocup Bel_{\X_2}$ on $\X_1 \times \X_2$.} \label{fig:casen2-ocup}
\end{figure}
There are $5 = 2^2 +1$ distinct such unions, the focal elements of $Bel_{\X_1} \ocup Bel_{\X_2}$, with masses:
\[
\begin{array}{l}
m (\{(x_i,x_j)\}^c) =
m_{{\X_1}} (\{x_i\}^c) \cdot m_{{\X_2}} (\{x_j\}^c),
\\
m (\X_1 \times \X_2) =
\displaystyle
1 - \sum_{i,j} m_{{\X_1}} (\{x_i\}^c) \cdot m_{{\X_2}} (\{x_j\}^c).
\end{array}
\]
We can now prove the following Lemma.
\begin{lemma} \label{lem:factorisation-disjunctive}
The belief function $Bel_{\X_1} \ocup \cdots \ocup Bel_{\X_n}$, where $\X_i = \X = \{T,F\}$, has $2^n + 1$ focal elements, namely all the complements of 
the $n$-tuples $\vec{x} = (x_1, ... , x_n)$ of singleton elements $x_i \in \X_i$, with BPA:
\begin{equation} \label{eq:mass-disjunctive}
\begin{array}{l} 
m_{Bel_{\X_1} \ocup \cdots \ocup Bel_{\X_n}}(\{ (x_1,...,x_n) \}^c) =
\\
\hspace{15mm}
= m_{\X_1}(\{x_1\}^c) \cdot \cdots \cdot m_{\X_n}(\{x_n\}^c),
\end{array}
\end{equation}
plus the Cartesian product $\X_1 \times \cdots \times \X_n$ itself, with mass value given by normalisation.
\end{lemma}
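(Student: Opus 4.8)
The plan is to argue by induction on $n$, mirroring the proof of Lemma~\ref{lem:factorisation-conjunctive} with unions in place of intersections. The base case $n=2$ is already established in the paragraph preceding the statement ($5 = 2^2+1$ focal elements with the claimed masses); the case $n=1$ is trivially true, since $Bel_{\X_1}$ itself has the three focal elements $\{T\}=\{F\}^c$, $\{F\}=\{T\}^c$ and $\X_1$. For the inductive step I assume the claim for $n$ and compute $\big(Bel_{\X_1}\ocup\cdots\ocup Bel_{\X_n}\big)\ocup Bel_{\X_{n+1}}$ on the common refinement $\X_1\times\cdots\times\X_{n+1}$; by associativity of $\ocup$ and Definition~\ref{def:belief-likelihood} this is the disjunctive combination of the vacuous extension (\ref{eq:vacuous-extension}) of $Bel_{\X_1}\ocup\cdots\ocup Bel_{\X_n}$ with that of $Bel_{\X_{n+1}}$. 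Note that no normalisation in Dempster's sense intervenes, since a union of non-empty sets is non-empty, so the combined BPA is literally (\ref{eq:combination-disjunctive}).

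Writing $\X^{(n)} = \X_1\times\cdots\times\X_n$, the inductive hypothesis plus vacuous extension gives the first operand the focal elements $\{\vec x\}^c\times\X_{n+1}$, one per singleton $\vec x\in\X^{(n)}$ with mass $\prod_{i=1}^n m_{\X_i}(\{x_i\}^c)$, together with $\X^{(n+1)}$; the second operand, $Bel_{\X_{n+1}}$ vacuously extended, has focal elements $\X^{(n)}\times\{a\}$ for $a\in\{T,F\}$, with mass $m_{\X_{n+1}}(\{a\})$, and $\X^{(n+1)}$. I then enumerate all pairwise unions. The only non-trivial one is
\[
\big(\{\vec x\}^c\times\X_{n+1}\big)\cup\big(\X^{(n)}\times\{a\}\big) = \{(\vec x,\bar a)\}^c,
\]
obtained from De Morgan's laws, since its complement is $\big(\{\vec x\}\times\X_{n+1}\big)\cap\big(\X^{(n)}\times\{\bar a\}\big) = \{(\vec x,\bar a)\}$; this step is where binariness of $\X_{n+1}$ is used, as it makes being different from $a$ single out the unique element $\bar a$. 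Every remaining union involves $\X^{(n+1)}$ on one side and therefore equals $\X^{(n+1)}$.

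As $\vec x$ ranges over the $2^n$ singletons of $\X^{(n)}$ and $a$ over $\{T,F\}$, the tuple $(\vec x,\bar a)$ ranges bijectively over the $2^{n+1}$ singletons of $\X^{(n+1)}$, and each target singleton complement $\{(\vec x,b)\}^c$ is produced by the single pair $\big(\{\vec x\}^c\times\X_{n+1},\ \X^{(n)}\times\{\bar b\}\big)$. Hence, by (\ref{eq:combination-disjunctive}), its mass is $\big(\prod_{i=1}^n m_{\X_i}(\{x_i\}^c)\big)\cdot m_{\X_{n+1}}(\{\bar b\}) = \prod_{i=1}^{n+1} m_{\X_i}(\{x_i\}^c)$, using $m_{\X_{n+1}}(\{\bar b\}) = m_{\X_{n+1}}(\{b\}^c)$ on the binary frame; this is exactly (\ref{eq:mass-disjunctive}) at step $n+1$. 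All remaining mass accrues to $\X^{(n+1)}$, the only other focal element, with value $1$ minus the sum of the singleton-complement masses — i.e., given by normalisation. This yields precisely $2^{n+1}+1$ focal elements with the asserted BPA, completing the induction.

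The main obstacle is bookkeeping rather than depth: one must (i) pin down the correct set identity for the union of a cylinder over $\X^{(n)}$ with a cylinder over $\X_{n+1}$, checking that it is always a singleton complement (and that this relies on the frames being binary), and (ii) verify that the map from pairs of input focal elements to output singleton complements is a bijection, so that the mass formula (\ref{eq:mass-disjunctive}) contains no surviving summation — in contrast to the mass of $\X^{(n+1)}$, which genuinely aggregates many contributions and is only pinned down by the total mass being $1$.
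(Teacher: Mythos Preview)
Your proof is correct and follows essentially the same inductive strategy as the paper: establish the $n=2$ base case, then identify each union of a cylindrically extended singleton-complement from $\X^{(n)}$ with a cylindrically extended singleton from $\X_{n+1}$ as a singleton-complement in $\X^{(n+1)}$, check distinctness, and let the remaining mass fall on $\X^{(n+1)}$. The only cosmetic difference is that you obtain the key set identity via De~Morgan's laws, whereas the paper argues it by element-chasing; your explicit verification of the bijection between input pairs and output singleton complements is in fact a bit crisper than the paper's.
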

\begin{proof}
The proof is by induction. The case $n=2$ was proven above. In the induction step, we assume that the thesis is true for $n$, namely that the focal elements of $Bel_{\X_1} \ocup \cdots \ocup Bel_{\X_n}$ have the form:
\begin{equation} \label{eq:fe}
A = \{ (x_1,...,x_n) \}^c = \{ (x'_1,...,x'_n) | \exists i : \{x'_i\} = \{x_i\}^c \},
\end{equation}
where $x_i \in \X_i = \X$. We need to prove it true for $n+1$.
\\
The vacuous extension of (\ref{eq:fe}) has trivially the form:
\[
A' = \{ (x'_1,...,x'_n, x_{n+1}) |
\exists i 
: \{x'_i\} = \{x_i\}^c,  x_{n+1} \in \X \}.
\]
Note that only $2=|\X|$ singletons of $\X_1 \times \cdots \times \X_{n+1}$ are \emph{not} in $A'$, for any given tuple $(x_1,...,x_n)$.
\\
The vacuous extension to $\X_1 \times \cdots \times \X_{n+1}$ of a focal element $B = \{x_{n+1}\}$ of $Bel_{\X_{n+1}}$ is instead:
\[
B' = \{ (y_1, \cdots, y_n ,x_{n+1}) | y_i \in \X \; \forall i=1,...,n \}.
\]
Now, all the elements of $B'$, except for $(x_1,...,x_n,x_{n+1})$, are also elements of $A'$.
Hence, the union $A' \cup B'$ reduces to the union of $A'$ and $(x_1,...,x_n,x_{n+1})$.
The only singleton element of $\X_1 \times \cdots \times \X_{n+1}$ not in $A' \cup B'$ is therefore $(x_1,...,x_n, x'_{n+1})$, $\{ x'_{n+1} \} = \{ x_{n+1} \}^c$, for it is neither in $A'$ nor in $B'$.
All such unions are distinct. Thus, by definition of $\ocup$, their mass is $m(\{ (x_1,...,x_n) \}^c) \cdot m(\{x_{n+1}\}^c)$ which by inductive hypothesis is equal to (\ref{eq:mass-disjunctive}).
Unions involving either $\X_{n+1}$ or $\X_1 \times \cdots \times \X_n$ are equal to $\X_1 \times \cdots \times \X_{n+1}$ by the property of the union operator.
\end{proof}

\subsection{Factorisation} 

\begin{theorem} \label{the:belief-likelihood-decomposition-ocup}
In the hypotheses of Lemma \ref{lem:factorisation-disjunctive}, 
when using disjunctive combination \ocup \hspace{0.3mm} in the definition of belief likelihood function, the following decomposition holds: 
\begin{equation} \label{eq:likelihood-decomposition-ocup-belief}
Bel_{\X_1 \times \cdots \times \X_n}(\{(x_1,...,x_n)\}^c | \theta) = \prod_{i=1}^n Bel_{\X_i} (\{x_i\}^c | \theta).
\end{equation}
\end{theorem}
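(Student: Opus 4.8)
The plan is to read Theorem~\ref{the:belief-likelihood-decomposition-ocup} off Lemma~\ref{lem:factorisation-disjunctive} together with the defining summation $Bel(A)=\sum_{B\subseteq A}m(B)$ and the behaviour of inclusion under complementation. By Lemma~\ref{lem:factorisation-disjunctive}, the belief likelihood $Bel_{\X_1\times\cdots\times\X_n}(\cdot|\theta)=Bel_{\X_1}^{\uparrow\times_i\X_i}\ocup\cdots\ocup Bel_{\X_n}^{\uparrow\times_i\X_i}(\cdot|\theta)$ has as focal elements exactly the $2^n$ singleton complements $\{(y_1,\dots,y_n)\}^c$, each carrying mass $\prod_{i=1}^n m_{\X_i}(\{y_i\}^c)$, plus the full Cartesian product $\X_1\times\cdots\times\X_n$.

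First I would identify which of these focal elements are subsets of $\{(x_1,\dots,x_n)\}^c$. The full product plainly is not, so it contributes nothing to $Bel_{\X_1\times\cdots\times\X_n}(\{(x_1,\dots,x_n)\}^c|\theta)=\sum_{B\subseteq\{(x_1,\dots,x_n)\}^c}m(B)$. For a singleton complement, the equivalence $\{(y_1,\dots,y_n)\}^c\subseteq\{(x_1,\dots,x_n)\}^c \iff \{(x_1,\dots,x_n)\}\subseteq\{(y_1,\dots,y_n)\}$ forces $(y_1,\dots,y_n)=(x_1,\dots,x_n)$, so exactly one focal element lies inside $\{(x_1,\dots,x_n)\}^c$. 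Hence $Bel_{\X_1\times\cdots\times\X_n}(\{(x_1,\dots,x_n)\}^c|\theta)=\prod_{i=1}^n m_{\X_i}(\{x_i\}^c|\theta)$.

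It then remains to replace the masses $m_{\X_i}(\{x_i\}^c|\theta)$ by the belief values $Bel_{\X_i}(\{x_i\}^c|\theta)$. This is where binarity enters: on $\X_i=\{T,F\}$ the complement $\{x_i\}^c$ is itself a singleton, so $Bel_{\X_i}(\{x_i\}^c|\theta)=m_{\X_i}(\{x_i\}^c|\theta)$ because $m_{\X_i}(\emptyset)=0$. Substituting yields (\ref{eq:likelihood-decomposition-ocup-belief}). A parallel, essentially one-line route uses the multiplicativity $Bel_1\ocup Bel_2(A)=Bel_1(A)\cdot Bel_2(A)$ recorded in Section~\ref{sec:combination}, iterated to $n$ factors and evaluated at $A=\{(x_1,\dots,x_n)\}^c$, upon checking that a cylindrical focal element $B\times\prod_{j\ne i}\X_j$ of the vacuous extension of $Bel_{\X_i}$ is contained in $\{(x_1,\dots,x_n)\}^c$ exactly when $B\subseteq\{x_i\}^c$, so that $Bel_{\X_i}^{\uparrow\times_i\X_i}(\{(x_1,\dots,x_n)\}^c|\theta)=Bel_{\X_i}(\{x_i\}^c|\theta)$. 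I expect no real obstacle here — the statement is a corollary of Lemma~\ref{lem:factorisation-disjunctive}; the only points needing care are the mass-versus-belief distinction (handled by binarity) and the verification that no focal element other than $\{(x_1,\dots,x_n)\}^c$ itself sits inside $\{(x_1,\dots,x_n)\}^c$, which is immediate from reversing inclusions under complementation.
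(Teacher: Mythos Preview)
Your proposal is correct and follows essentially the same route as the paper: observe via Lemma~\ref{lem:factorisation-disjunctive} that the only focal element contained in $\{(x_1,\dots,x_n)\}^c$ is that set itself, so $Bel=m$ there, then factor the mass and pass from $m_{\X_i}(\{x_i\}^c)$ to $Bel_{\X_i}(\{x_i\}^c)$ using that $\{x_i\}^c$ is a singleton on a binary frame. Your complementation argument makes explicit what the paper asserts in one line, and your alternative via the multiplicativity $Bel_1\ocup Bel_2(A)=Bel_1(A)Bel_2(A)$ is a legitimate shortcut the paper records but does not invoke here.
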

\begin{proof}
As $\{(x_1,...,x_n)\}^c$ contains only itself as a focal element:
\[
Bel_{\X_1 \times \cdots \times \X_n}(\{(x_1,...,x_n)\}^c | \theta) = m(\{(x_1,...,x_n)\}^c | \theta).
\]
By Lemma \ref{lem:factorisation-disjunctive} the latter becomes
\[
\begin{array}{lll}
& &  \displaystyle
Bel_{\X_1 \times \cdots \times \X_n}(\{(x_1,...,x_n)\}^c | \theta)
\\
& = & \displaystyle
\prod_{i=1}^n m_{\X_i} (\{x_i\}^c | \theta) = \prod_{i=1}^n Bel_{\X_i} (\{x_i\}^c | \theta),
\end{array}
\]
as $\{x_i\}^c$ is a singleton element of $\X_i$, and we have (\ref{eq:likelihood-decomposition-ocup-belief}).
\end{proof}

Note that $Pl_{\X_1 \times \cdots \times \X_n}(\{(x_1,...,x_n)\}^c | \theta) = 1$ for all tuples $(x_1,...,x_n)$, as the set $\{(x_1,...,x_n)\}^c$ has non-empty intersection with all the focal elements of $Bel_{\X_1} \ocup \cdots \ocup Bel_{\X_n}$.

\section{General factorisation results} \label{sec:general}

The argument of Lemma \ref{lem:factorisation-conjunctive} is in fact valid for the conjunctive combination of belief functions defined on an arbitrary collection $\X_1, ...,\X_n$ of finite spaces.

\begin{theorem} \label{the:factorisation-conjunctive}
For any $n \in \mathbb{Z}$ the belief function $Bel_{\X_1} \oplus \cdots \oplus Bel_{\X_n}$, where $\X_1,...,\X_n$ are finite spaces, has as focal elements all the Cartesian products $A_1\times ... \times A_n$ of $n$ focal elements $A_1 \subseteq \X_1, ..., A_n \subseteq \X_n$, with BPA:
\[
m_{Bel_{\X_1} \oplus \cdots \oplus Bel_{\X_n}} (A_1\times ... \times A_n) = \prod_{i=1}^n m_{\X_i}(A_i).
\]
\end{theorem}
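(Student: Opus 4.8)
The plan is to run exactly the inductive argument used for Lemma~\ref{lem:factorisation-conjunctive}, observing that nothing there used the hypothesis $|\X_i| = 2$: the proof goes through verbatim once ``non-empty subsets of $\{T,F\}$'' is replaced by ``focal elements of $Bel_{\X_i}$'' (which, for a general BPA, need not include the singletons or the whole frame).

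First I would treat the base case $n = 2$, the case $n = 1$ being trivial. The vacuous extension $Bel_{\X_1}^{\uparrow \X_1 \times \X_2}$ has as focal elements precisely the cylinders $A_1 \times \X_2$ with $A_1$ a focal element of $Bel_{\X_1}$, carrying mass $m_{\X_1}(A_1)$, and symmetrically $Bel_{\X_2}^{\uparrow \X_1 \times \X_2}$ has focal elements $\X_1 \times A_2$ with mass $m_{\X_2}(A_2)$. Their intersection is $(A_1 \times \X_2) \cap (\X_1 \times A_2) = A_1 \times A_2$, which is non-empty because focal elements are non-empty by Definition~\ref{def:bpa}. Hence $m_\cap(\emptyset) = 0$, Dempster's rule coincides with the conjunctive rule, no normalisation occurs, and $m_{Bel_{\X_1} \oplus Bel_{\X_2}}(A_1 \times A_2) = m_{\X_1}(A_1)\, m_{\X_2}(A_2)$. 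The one point needing a line of justification is that $(A_1, A_2) \mapsto A_1 \times A_2$ is injective on pairs of non-empty sets --- immediate by taking coordinate projections --- so distinct pairs of input focal elements produce distinct output focal elements and masses are never pooled; and, by the definition of Dempster's rule, these are the only focal elements.

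Then I would carry out the induction step. Assuming the claim for $n$, the combination $Bel_{\X_1} \oplus \cdots \oplus Bel_{\X_n}$ on $\X_1 \times \cdots \times \X_n$ has focal elements $A_1 \times \cdots \times A_n$ with mass $\prod_{i=1}^n m_{\X_i}(A_i)$; its vacuous extension to $\X_1 \times \cdots \times \X_{n+1}$ then has focal elements $A_1 \times \cdots \times A_n \times \X_{n+1}$ with the same masses, while the vacuous extension of $Bel_{\X_{n+1}}$ has focal elements $\X_1 \times \cdots \times \X_n \times A_{n+1}$ with mass $m_{\X_{n+1}}(A_{n+1})$. Their intersection is $A_1 \times \cdots \times A_{n+1}$, non-empty and --- again by the projection argument --- distinct for distinct input pairs, so Dempster's rule needs no normalisation and assigns this set mass $\big( \prod_{i=1}^n m_{\X_i}(A_i) \big)\, m_{\X_{n+1}}(A_{n+1}) = \prod_{i=1}^{n+1} m_{\X_i}(A_i)$, and these exhaust the focal elements of the $(n+1)$-fold combination.

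I do not anticipate a genuine obstacle here: the statement is the binary Lemma with ``binary'' deleted, and the only things requiring care are the bookkeeping that the listed Cartesian products are \emph{all} the focal elements and that none of them coincide (uniqueness of Cartesian decomposition for non-empty sets), together with the observation that no intersection along the way is ever empty, so that $\oplus$ and $\ocap$ agree throughout and no normalisation constant ever appears --- which also yields, exactly as in the binary case, the same statement with $\ocap$ in place of $\oplus$.
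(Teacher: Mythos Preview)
Your proposal is correct and matches the paper's intended approach: the paper itself omits the proof, stating only that it ``is similar to that of Lemma~\ref{lem:factorisation-conjunctive}, and is omitted for lack of space.'' You have faithfully reconstructed that argument, and your added bookkeeping remarks (injectivity of $(A_1,\dots,A_n)\mapsto A_1\times\cdots\times A_n$ on tuples of non-empty sets, and the observation that no intersection is empty so $\oplus$ and $\ocap$ agree) make explicit points that the proof of Lemma~\ref{lem:factorisation-conjunctive} left implicit.
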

The proof is similar to that of Lemma \ref{lem:factorisation-conjunctive}, and is omitted for lack of space. It follows that:

\begin{corollary} \label{cor:belief-likelihood-decomposition-ocap}
When using either \ocap \hspace{0.3mm} or $\oplus$ as a combination rule in the definition of belief likelihood function, the following decomposition holds for tuples $(x_1,...,x_n)$, $x_i \in \X_i$, which are the singletons elements of $\X_1 \times \cdots \times \X_n$, with $\X_1, ... , \X_n$ any finite frames of discernment:
\begin{equation} \label{eq:belief-likelihood-decomposition-ocap-general}
\begin{array}{c}
\displaystyle
Bel_{\X_1 \times \cdots \times \X_n}(\{(x_1,...,x_n)\}|\theta) = 
\prod_{i=1}^n Bel_{\X_i} (\{x_i\}|\theta).
\end{array}
\end{equation}
\end{corollary}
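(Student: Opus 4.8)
The plan is to derive Corollary~\ref{cor:belief-likelihood-decomposition-ocap} from Theorem~\ref{the:factorisation-conjunctive} in exactly the same way that Theorem~\ref{the:belief-likelihood-decomposition-ocap} was derived from Lemma~\ref{lem:factorisation-conjunctive} in the binary case. First I would observe that a sharp tuple $(x_1,\dots,x_n)$, viewed as a subset of the Cartesian product $\X_1 \times \cdots \times \X_n$, is itself a Cartesian product of singletons: $\{(x_1,\dots,x_n)\} = \{x_1\} \times \cdots \times \{x_n\}$, where each $\{x_i\}$ is a (focal-element candidate) subset of $\X_i$. By Definition~\ref{def:belief-likelihood}, $Bel_{\X_1 \times \cdots \times \X_n}(\cdot|\theta)$ is the combination (via $\ocap$ or $\oplus$) of the vacuous extensions $Bel_{\X_j}^{\uparrow \times_i \X_i}$, so Theorem~\ref{the:factorisation-conjunctive} applies verbatim: its focal elements are exactly the Cartesian products $A_1 \times \cdots \times A_n$ of focal elements $A_i \subseteq \X_i$, with mass $\prod_{i=1}^n m_{\X_i}(A_i)$.

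The key step is then that the belief value $Bel_{\X_1 \times \cdots \times \X_n}(\{(x_1,\dots,x_n)\}|\theta)$ is a sum of masses of focal elements $B \subseteq \{(x_1,\dots,x_n)\}$. Since every focal element $B$ is a Cartesian product $B_1 \times \cdots \times B_n$ with each $B_i$ nonempty, and since a nonempty Cartesian product is contained in the singleton $\{x_1\} \times \cdots \times \{x_n\}$ only when $B_i = \{x_i\}$ for every $i$, there is a unique such focal element, namely $\{(x_1,\dots,x_n)\}$ itself. Hence
\[
Bel_{\X_1 \times \cdots \times \X_n}(\{(x_1,\dots,x_n)\}|\theta) = m_{Bel_{\X_1} \odot \cdots \odot Bel_{\X_n}}(\{(x_1,\dots,x_n)\}) = \prod_{i=1}^n m_{\X_i}(\{x_i\}).
\]
Finally, because mass and belief coincide on singletons, $m_{\X_i}(\{x_i\}) = Bel_{\X_i}(\{x_i\}|\theta)$, giving \eqref{eq:belief-likelihood-decomposition-ocap-general}. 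I would also note that Dempster's rule and the conjunctive rule agree here: as in Lemma~\ref{lem:factorisation-conjunctive}, all pairwise intersections of (extended) focal elements are nonempty Cartesian products, so no normalisation intervenes and the argument is valid for both $\ocap$ and $\oplus$.

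I do not expect a genuine obstacle, since this is a direct corollary of an already-granted theorem; the only point requiring a line of care is the combinatorial claim that no proper nonempty sub-product of a singleton product exists, which follows immediately from the definition of Cartesian product together with the fact that focal elements are nonempty. If one wanted to be thorough one could also remark that the case of general (not necessarily binary, not necessarily equal) frames means there may be non-singleton focal elements $A_i$ with $|A_i| \geq 2$ in the combined belief function, but none of these can be a subset of a singleton tuple, so they contribute nothing to the belief value in question — which is precisely why the single-term sum survives.
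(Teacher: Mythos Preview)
Your proposal is correct and follows essentially the same route as the paper's own proof: write the sharp tuple as a product of singletons, invoke Theorem~\ref{the:factorisation-conjunctive} to get the mass factorisation, and use that mass and belief agree on singletons. Your version is simply more explicit about why the singleton product is the only focal element contained in $\{(x_1,\dots,x_n)\}$ and about the equivalence of $\ocap$ and $\oplus$ here, both of which the paper leaves implicit.
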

\begin{proof}
For the singleton elements of $\X_1 \times \cdots \times \X_n$, since $\{(x_1,...,x_n)\} = \{x_1\} \times ... \times \{x_n\}$, Equation (\ref{eq:belief-likelihood-decomposition-ocap-general}) becomes: $Bel_{\X_1 \times \cdots \times \X_n}(\{(x_1,...,x_n)\}) =$
$m_{Bel_{\X_1} \oplus \cdots \oplus Bel_{\X_n}}(\{ (x_1,...,x_n) \}) = \prod_{i=1}^n m_{\X_i}(\{x_i \}) =$ $\prod_{i=1}^n Bel_{\X_i} (\{x_i\})$,
where the mass factorisation follows from Theorem \ref{the:factorisation-conjunctive},
as on singletons mass and belief values coincide. 
\end{proof}

\section{Lower and upper likelihoods of Bernoulli trials} \label{sec:bernoulli}

In the case of Bernoulli trials, where not only there is a single binary sample space, $\X_i = \X = \{T,F\}$ and conditional independence holds, but the random variables are assumed equally distributed, the conventional likelihood reads as $p^k (1-p)^{n-k}$, where $p = P(T)$, $k$ is the number of successes ($T$) and $n$ the total number of trials.
\\
Let us then compute the lower likelihood function for a series of Bernoulli trials, under the assumption that all the BFs $Bel_{\X_i} = Bel_{\X}$, $i=1,...,n$, coincide (the analogous of equidistribution), with $Bel_\X$  parameterised by $p=m(\{T\})$, $q = m(\{F\})$ (where, this time, $p +q \leq 1$). 
\begin{corollary}
Under the above assumptions, the lower and upper likelihoods of the sample $\vec{x} = (x_1,...,x_n)$ are:
\begin{equation} \label{eq:lower-upper-likelihoods-bernoulli}
\begin{array}{l}
\underline{L}(\vec{x}) = \displaystyle \prod_{i=1}^n Bel_\X(\{x_i\})
= p^k q^{n-k};
\\
\overline{L}(\vec{x}) =
\displaystyle \prod_{i=1}^n Pl_\X(\{x_i\}) = (1-q)^k (1-p)^{n-k}.
\end{array}
\end{equation}
\end{corollary}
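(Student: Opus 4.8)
The plan is to handle the two equalities in each line of (\ref{eq:lower-upper-likelihoods-bernoulli}) separately: the leftmost equalities are instances of factorisation results already in hand (or, for the plausibility, of an identity I will verify along the way), while the rightmost equalities are elementary bookkeeping once one records how many of the $x_i$ equal $T$.

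For the lower likelihood I would start from $\underline{L}(\vec{x}) = Bel_{\X_1 \times \cdots \times \X_n}(\{(x_1,\ldots,x_n)\}|\theta)$ (Definition \ref{def:lower-upper-likelihoods}) and invoke Theorem \ref{the:belief-likelihood-decomposition-ocap}, whose hypotheses hold here since the belief likelihood is built with $\ocap$ (equivalently $\oplus$, no normalisation being needed) and every $\X_i = \{T,F\}$. This gives $\underline{L}(\vec{x}) = \prod_{i=1}^n Bel_{\X_i}(\{x_i\}|\theta)$. Under the equidistribution assumption $Bel_{\X_i} = Bel_\X$ with $Bel_\X(\{T\}) = m(\{T\}) = p$ and $Bel_\X(\{F\}) = m(\{F\}) = q$ (mass and belief agree on singletons), so if $k$ of the $x_i$ equal $T$ and $n-k$ equal $F$, the product collapses to $p^k q^{n-k}$.

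For the upper likelihood, rather than invoking the still-conjectural plausibility factorisation (\ref{eq:plausibility-likelihood-decomposition-ocap}) in full generality, I would compute $\overline{L}(\vec{x}) = Pl_{\X_1 \times \cdots \times \X_n}(\{(x_1,\ldots,x_n)\}|\theta)$ directly via $Pl(\{\vec{x}\}) = 1 - Bel(\{\vec{x}\}^c) = 1 - \sum_{B \subseteq \{\vec{x}\}^c} m(B)$, where $m$ is the BPA of $Bel_{\X_1} \ocap \cdots \ocap Bel_{\X_n}$. By Lemma \ref{lem:factorisation-conjunctive} every focal element is a Cartesian product $A_1 \times \cdots \times A_n$ with $\emptyset \neq A_i \subseteq \X_i$ and mass $\prod_i m_{\X_i}(A_i)$, and these products carry total mass $1$; such a product fails to be contained in $\{\vec{x}\}^c$ exactly when it contains the point $\vec{x}$, i.e. when $x_i \in A_i$ for every $i$. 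Hence $\sum_{B \subseteq \{\vec{x}\}^c} m(B) = 1 - \sum_{B \ni \vec{x}} m(B)$ and $\sum_{B \ni \vec{x}} m(B) = \prod_{i=1}^n \bigl( m_{\X_i}(\{x_i\}) + m_{\X_i}(\X_i) \bigr)$, since at each coordinate the admissible choices are $A_i = \{x_i\}$ or $A_i = \X_i$. Writing $r \doteq m(\X) = 1 - p - q$, this factor equals $p + r = 1-q$ at a coordinate with $x_i = T$ and $q + r = 1-p$ at a coordinate with $x_i = F$, so $\overline{L}(\vec{x}) = (1-q)^k (1-p)^{n-k}$. Finally $Pl_\X(\{T\}) = 1 - Bel_\X(\{F\}) = 1-q$ and $Pl_\X(\{F\}) = 1 - Bel_\X(\{T\}) = 1-p$, which identifies this value with $\prod_{i=1}^n Pl_\X(\{x_i\})$ and, incidentally, confirms (\ref{eq:plausibility-likelihood-decomposition-ocap}) in the equidistributed Bernoulli case.

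The only genuinely delicate step is the set-theoretic identification of the focal elements lying inside the singleton complement $\{\vec{x}\}^c$; everything else is substitution and counting. I would be careful there to use that in a binary frame the unique proper non-empty subset of $\X_i$ not containing $x_i$ is $\{x_i\}^c$, so ``$B \not\subseteq \{\vec{x}\}^c$'' is equivalent to ``$x_i \in A_i$ for all $i$'', which makes the complementary-sum argument go through cleanly and, as a bonus, sidesteps the heavier binomial manipulation used for the all-$T$ tuple in Section \ref{sec:belief-likelihood-conjunctive}.
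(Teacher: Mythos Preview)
Your argument is correct. For the lower likelihood you do exactly what the paper does: invoke Theorem~\ref{the:belief-likelihood-decomposition-ocap} and then count occurrences of $T$ and $F$.

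For the upper likelihood, however, your route is genuinely different from---and sharper than---the paper's. The paper does not prove the $\overline{L}$ identity outright: it states the corollary and then remarks that the decomposition for $\overline{L}(\vec{x})$ ``is valid under the assumption that Conjecture~\ref{con:plausibility-likelihood-decomposition-ocap} holds, at least for Bernoulli trials''; the only supporting computation it offers is the binomial manipulation in Section~\ref{sec:factorisation-conjunctive-sharp}, which treats just the single tuple $(T,\ldots,T)$ under equidistribution. Your approach bypasses the conjecture entirely by observing that, since every focal element of $Bel_{\X_1}\ocap\cdots\ocap Bel_{\X_n}$ is a Cartesian product with product mass (Lemma~\ref{lem:factorisation-conjunctive}), the plausibility $Pl(\{\vec{x}\}) = \sum_{B\ni\vec{x}} m(B)$ factorises coordinatewise into $\prod_i\big(m_{\X_i}(\{x_i\})+m_{\X_i}(\X_i)\big)=\prod_i Pl_{\X_i}(\{x_i\})$. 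This handles every sharp tuple at once, requires no binomial identities, and in fact establishes Conjecture~\ref{con:plausibility-likelihood-decomposition-ocap} in the binary case (equidistribution plays no role until the final rewriting as $(1-q)^k(1-p)^{n-k}$). What the paper's approach buys is methodological consistency---it casts both lines of~(\ref{eq:lower-upper-likelihoods-bernoulli}) as instances of a single factorisation principle---but at the price of leaving the upper likelihood conditional on an unproved conjecture; your direct computation removes that gap.
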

The above decomposition for $\overline{L}(\vec{x})$, in particular, is valid under the assumption that Conjecture \ref{con:plausibility-likelihood-decomposition-ocap} holds, at least for Bernoulli trials, as the evidence seems to suggest.
\\
After normalisation, these can be seen as probability distribution functions (PDFs) over the (belief) space $\mathcal{B}$ of all belief functions definable on $\X$ \cite{cuzzolin08smcc,cuzzolin14lap,cuzzolin18springer}.

Having observes a series of trials, $\vec{x} = ( x_1,...,x_n )$, $x_i \in \{T,F\}$, one may then seek the belief function on $\X = \{T,F\}$ which best describes the observed sample, i.e., the optimal values of the two parameters $p$ and $q$. 

\begin{figure}[ht!] 
\begin{tabular}{c}
\hspace{-5mm}\includegraphics[width = 0.5\textwidth]{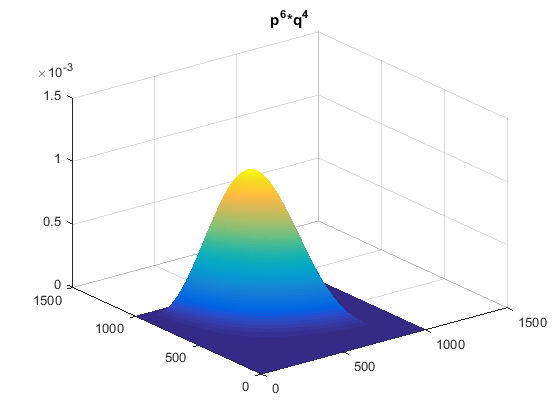}
\\
\hspace{-5mm}
\includegraphics[width = 0.5\textwidth]{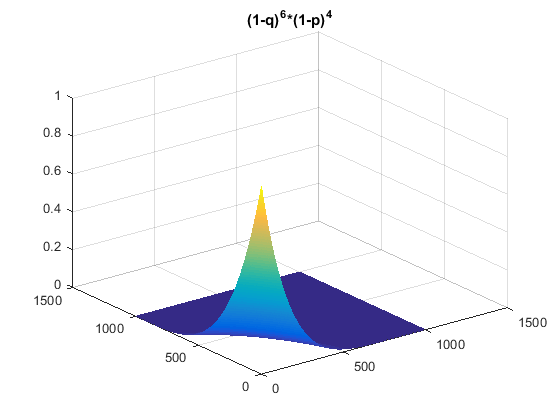}
\end{tabular}
\caption{Lower (top) and upper (bottom) likelihood functions plotted over the space of belief functions on $\X = \{ T,F \}$, parameterised by $p = m(T)$ ($X$ axis) and $q = m(F)$ ($Y$ axis), for the case of $k=6$ successes over $n=10$ trials.} \label{fig:lower-upper-likelihoods}
\end{figure} 

Figure \ref{fig:lower-upper-likelihoods} plots the both lower and upper likelihoods (\ref{eq:lower-upper-likelihoods-bernoulli}) for the case of $k=6$ successes over $n=10$ trials.
\\
Both subsume the traditional likelihood $p^k(1-p)^{n-k}$, as their section for $p+q =1$, although this is particularly visible for the lower likelihood (top). In particular, the maximum of the lower likelihood is the traditional ML estimate $p = k/n$, $q = 1 - p$.
This makes sense, for the lower likelihood is highest for the most committed belief functions (i.e., for probability measures).
The upper likelihood (bottom) has a unique maximum in $p=q=0$: this is the vacuous belief function on $\{T,F\}$, with $m(\{T,F\}) = 1$.

The interval of belief functions joining $\max \overline{L}$ with $\max \underline{L}$ is the set of belief functions such that $\frac{p}{q} = \frac{k}{n-k}$, i.e., those which \emph{preserve the ratio between the observed empirical counts}.

\section{Generalising logistic regression} \label{sec:generalised-logistic}

Based on Theorem \ref{the:belief-likelihood-decomposition-ocap} and Conjecture \ref{con:plausibility-likelihood-decomposition-ocap}, 
we can also generalise logistic regression (Section \ref{sec:logistic-regression}) to a belief function setting by replacing the conditional probability $(\pi_i, 1-\pi_i)$ on $\X = \{T,F\}$ with a belief function ($p_i = m(\{T\})$, $q_i = m(\{F\})$, $p_i + q_i \leq 1$) on $2^\X$.
\\
Note that, just as in a traditional logistic regression setting (Section \ref{sec:logistic-regression}), the belief functions $Bel_i$ associated with different input values $x_i$ are \emph{not} equally distributed.

After writing $T=1, F=0$, the lower and upper likelihoods can be expressed as:
\[
\begin{array}{lll}
\underline{L}(\beta|Y) & = & \displaystyle \prod_{i=1}^n p_i^{Y_i} q_i^{1-Y_i}, 
\\ 
\overline{L}(\beta|Y) & = & \displaystyle \prod_{i=1}^n (1 - q_i)^{Y_i} (1 - p_i)^{1-Y_i}.
\end{array}
\]
The question becomes how to generalise the logit link between observations $x$ and outputs $y$, in order to seek an analytical mapping between observations and belief functions over a binary frame. Just assuming:
\begin{equation}\label{eq:logit3}
p_i = P(Y_i=1|x_i) = \displaystyle \frac{1}{1+e^{ - (\beta _{0}+\beta _{1}x_i) } }, 
\end{equation}
as in the classical contraint (\ref{eq:logit}), does not yield any analytical dependency for $q_i$.
To address this issue we can, for instance, add a parameter $\beta_2$ such that the following relationship holds:
\begin{equation}\label{eq:logit2}
q_i = m(Y_i=0|x_i) = \beta_2 \frac{e^{-(\beta _{0}+\beta _{1}x_i)}}{1+e^{-(\beta _{0}+\beta _{1}x_i)}}.
\end{equation}
We can then seek lower and upper optimal estimates for the parameter vector $\beta = [\beta_0,\beta_1,\beta_2]'$:
\begin{equation} \label{eq:generalised-logistic}
\arg\max_{\beta} \underline{L} \mapsto \underline{\beta}_0, \underline{\beta}_1, \underline{\beta}_2, 
\quad 
\arg\max_{\beta} \overline{L} \mapsto \overline{\beta}_0, \overline{\beta}_1, \overline{\beta}_2.
\end{equation}
Plugging these optimal parameters into (\ref{eq:logit3}), (\ref{eq:logit2}) will then yield a lower and an upper family of conditional belief functions given $x$ (i.e., an interval of belief functions):
\[
Bel_\X (.|\underline{\beta},x), \quad Bel_\X (.|\overline{\beta},x).
\]
Given any new test observation $x'$, this generalised logistic regression method will then output a pair of lower and upper belief functions on $\X = \{T,F\}$, as opposed to a sharp probability value as in the classical framework. As each belief function itself provides a lower and an upper probability for each event, both the lower and the upper regressed BFs will provide an interval for the probability $P(T)$ of success, whose width will reflect the uncertainty encoded by the training set of sample series.

\subsection{Optimisation}

The problems (\ref{eq:generalised-logistic}) are both constrained optimisation ones (contrarily to the classical case, where $q_i=1-p_i$ and the optimisation problem is unconstrained). 
\\
Indeed, the parameter vector $\beta$ must be such that:
\[
0 \leq p_i + q_i \leq 1 \quad \forall i=1,...,n.
\]
Fortunately the number of constraints can be reduced by noticing that, if $p_i$ and $q_i$ have the analytical forms (\ref{eq:logit3}) and (\ref{eq:logit2}), respectively, then $p_i + q_i \leq 1$ for all $i$ whenever $\beta_2 \leq 1$.
\\
The objective function can also be simplified by taking the logarithm. In the lower likelihood case we get\footnote{Derivations are omitted due to lack of space.}:
\[
\begin{array}{lll}
\log \underline{L}(\beta|Y) & = & \displaystyle \sum_{i=1}^n \Big \{ -\log (1+e^{-(\beta_0+\beta_1 x_i)})
\\
& & + (1 - Y_i) \big  [ \log \beta_2 - (\beta_0 + \beta_1 x_i) \big ] \Big \}.
\end{array}
\]

We then need to analyse the Karush-Kuhn-Tucker (KKT) necessary conditions for the optimality of the solution of a nonlinear optimisation problem $\arg\max_x f(x)$
subject to differentiable constraints: $g_i(x)\leq 0$ $i=0,...,n$.
\\
If $x^{*}$ is a local optimum, under some regularity conditions then there exist constants $\mu _{i}$, $(i=0,\ldots ,n)$, called \emph{KKT multipliers}, such that the following conditions hold:
\begin{enumerate}
\item
$\nabla f(x^{*}) - \sum _{i=1}^{m}\mu _{i}\nabla g_{i}(x^{*}) = 0$ (\emph{Stationarity});
\item
\emph{Primal feasibility}: 
$g_{i}(x^{*})\leq 0$ for all $i=0,\ldots ,n$;
\item
\emph{Dual feasibility}:
$\mu _{i}\geq 0$ for all $i=0,\ldots ,n$;
\item
\emph{Complementary slackness}: 
$\mu _{i}g_{i}(x^{*})=0$ for all $i$.
\end{enumerate}
In our case the constraints are:
\[
\begin{array}{lll}
\beta_1 \leq 1  & \equiv & g_0 = \beta_2 - 1 \leq 0;
\\
p_i + q_i \geq 0 & \equiv & g_i = - \beta_2 - e^{\beta_0 + \beta_1 x_i} \leq 0
\end{array}
\]
$i=1,...,n$, and the Lagrangian becomes:
\[
\Lambda(\beta) = \log \underline{L}(\beta) + \mu_0 (\beta_2-1) - \sum_{i=1}^n \mu_i (\beta_2 + e^{\beta_0 + \beta_1 x_i}).
\]
The stationarity conditions thus read as $\nabla\Lambda(\beta) = 0$, i.e.:
\begin{equation} \label{eq:kkt-stationarity}
\left \{
\begin{array}{l}
\displaystyle
\sum_{i=1}^n \Big [ (1-p_i) - (1-Y_i) - \mu_i e^{\beta_0+\beta_1 x_i} \Big ] = 0,
\\
\displaystyle
\sum_{i=1}^n \Big [ (1-p_i) - (1-Y_i) - \mu_i e^{\beta_0+\beta_1 x_i} \Big ] x_i = 0,
\\
\displaystyle
\sum_{i=1}^n \left (\frac{1 - Y_i}{\beta_2} - \mu_i \right ) + \mu_0 = 0,
\end{array}
\right .
\end{equation}
where, as usual, $p_i$ is as in (\ref{eq:logit3}).
\\
Complementary slackness reads instead as follows:
\begin{equation} \label{eq:kkt-slackness}
\left \{
\begin{array}{l}
\mu_0 (\beta_2 - 1) = 0,
\\
\mu_i (\beta_2 + e^{\beta_0+\beta_1 x_i}) = 0, \quad i=1,...,n.
\end{array}
\right .
\end{equation}

Standard gradient descent methods can be applied to the above systems of equations to get the optimal parameters of our generalised logistic regressor.
Similar calculations hold for the upper likelihood problem. A multi-objective optimisation setting in which the lower likelihood is minimised as the upper likelihood is maximised can be envisaged, to generate the most cautious interval of estimates.

\section{Conclusions} \label{sec:conclusions}

In this paper, 
stimulated by the inability of logistic regression to encode uncertainty induced by scarcity of samples in certain areas of the sample space, and inference mechanisms for belief measures which take the classical likelihood function at face value, we defined a belief likelihood function for repeated trials, as an inference methodology both more in line with the random set philosophy of inherently set-valued observations, and capable of allowing a promising generalisation of logistic regression.
We analysed the factorisation properties of the belief likelihood function when either
conjunctive or disjunctive combination is applied, in particular for the case of binary spaces, and computed the lower and upper likelihoods of series of Bernoulli trials.
Eventually, we proposed a generalised logistic regression framework which leads to a pair of dual constrained optimisation problems, which can be solved by standard methods. 

A number of interesting research lines lie ahead. Among others, a systematic comparison with other approaches to belief function inference,  
or the computation of belief and plausibility likelihood for other major combination rules.
As far as generalised logistic regression is concerned, different parameterisations of the belief functions involved need to be explored. A comprehensive testing of the robustness of the generalised framework in standard estimation problems, including rare event analysis, needs to be conducted to validate this new procedure. 
The method, unlike traditional ones, can naturally cope with missing data (represented by vacuous observations $x_i = \X$), therefore providing a robust framework for logistic regression which can deal with incomplete series of observations.


\begin{thebibliography}{10}

\bibitem{Augustin96}
T.~Augustin, \emph{Modeling weak information with generalized basic probability
  assignments}, Data Analysis and Information Systems - Statistical and
  Conceptual Approaches 
  Springer, 1996, pp.~101--113.

\bibitem{Bail2053951715604333}
C.~A. Bail, \emph{Lost in a random forest: Using big data to study rare
  events}, Big Data \& Society \textbf{2}(2), 2015.

\bibitem{Chateauneuf89}
A.~Chateauneuf and J.Y. Jaffray, \emph{Some characterization of lower
  probabilities and other monotone capacities through the use of {M}oebius
  inversion}, Math. Soc. Sci. \textbf{17} (1989), 263--283.
  
\bibitem{Cox58}
D. R. Cox, \emph{The Regression Analysis of Binary Sequences}, Journal of the Royal Statistical Society. Series B \textbf{20} (1958), no. 2, 215--242.

\bibitem{cuzzolin08smcc}
F.~Cuzzolin, \emph{A geometric approach to the theory of evidence}, IEEE
  Transactions on Systems, Man and Cybernetics part C \textbf{38}(4), 522--534, 2008.

\bibitem{cuzzolin10ida}
F.~Cuzzolin, \emph{Three alternative combinatorial formulations of the theory of
  evidence}, Intelligent Data Analysis \textbf{14} (2010), no.~4, 439--464.

\bibitem{cuzzolin14lap}
F.~Cuzzolin, \emph{Visions of a generalized probability theory}, Lambert
  Academic Publishing, 2014.
  
\bibitem{cuzzolin18springer}
F.~Cuzzolin, \emph{The geometry of uncertainty}, Springer-Verlag, 2018.

\bibitem{denneberg94conditioning}
D.~Denneberg, \emph{Conditioning (updating) non-additive probabilities}, 
AOR \textbf{52}, 21--42, 1994.

\bibitem{denneberg99interaction}
D. Denneberg and M. Grabisch, \emph{Interaction transform of set
  functions over a finite set}, Information Sciences \textbf{121}, 149--170, 1999.

\bibitem{dubois86set}
D. Dubois \& H. Prade, \emph{A set theoretical view of belief
  functions}, Int. J. Intelligent Systems \textbf{12}, 193--226, 1986.

\bibitem{fagin91new}
R.~Fagin and J.Y. Halpern, \emph{A new approach to updating beliefs}, Proc. of
  UAI, 1991, pp.~347--374.

\bibitem{ECIN:ECIN272}
A. Gelman, N. Silver, and A. Edlin, \emph{What is the probability your
  vote will make a difference?}, Economic Inquiry \textbf{50}(2), 321--326, 2012. 

\bibitem{gilboa93updating}
I.~Gilboa \& D.~Schmeidler, \emph{Updating ambiguous beliefs}, J. of
  Economic Theory \textbf{59} (1993), 33--49.

\bibitem{Hestir91}
H.~T. Hestir, H.~T. Nguyen, and G.~S. Rogers, \emph{A random set formalism for
  evidential reasoning}, Conditional Logic in Expert Systems, North Holland,
  1991, pp.~309--344.

\bibitem{hsia91characterizing}
Y.~T. Hsia, \emph{Characterizing belief functions with minimal commitment},
  Proceedings of IJCAI-91, 1991, pp.~1184--1189.

\bibitem{itoh95new}
M.~Itoh and T.~Inagaki, \emph{A new conditioning rule for belief updating in
  the {D}empster-{S}hafer theory of evidence}, Transactions of the Society of
  Instrument and Control Engineers \textbf{31}(12), 2011--17, 1995.

\bibitem{Jaffray92}
J.~Y. Jaffray, \emph{Bayesian updating and belief functions}, IEEE Tr.
  SMC \textbf{22} (1992), 1144--1152.

\bibitem{King2001}
G. King and L. Zeng, \emph{Logistic regression in rare events data},
  Political Analysis \textbf{9} (2001), 137{\textendash}163.

\bibitem{Kramosil02probabilistic-analysis}
I.~Kramosil, \emph{Probabilistic analysis of belief functions}, Springer, 2001.

\bibitem{kruse91reasoning}
R.~Kruse, D.~Nauck, and F.~Klawonn, \emph{Reasoning with mass}, Proc. of UAI, 1991, pp.~182--187.

\bibitem{kruse91tool}
R.~Kruse, E.~Schwecke, and F.~Klawonn, \emph{On a tool for reasoning with mass
  distribution}, Proc. of IJCAI, vol.~2, 1991, pp.~1190--1195.

\bibitem{kyburg87bayesian}
H.~Kyburg, \emph{Bayesian and non-{B}ayesian evidential updating}, AIJ \textbf{31}(3), 271--294, 1987.

\bibitem{Levi80}
I.~Levi, \emph{The enterprise of knowledge}, 1980.

\bibitem{Falk04}
J.~Husler M.~Falk and R-D. Reiss, \emph{Laws of small numbers: Extremes and
  rare events},  2004.

\bibitem{matheronrandom}
G.~Matheron, \emph{Random sets and integral geometry}, John Wiley \& Sons, 1974.

\bibitem{Nguyen78}
H.~T. Nguyen, \emph{On random sets and belief functions}, J. Math. Anal. Appl. \textbf{65} (1978), 531--542.

\bibitem{Shafer76}
G.~Shafer, \emph{A mathematical theory of evidence}, Princeton University
  Press, 1976.

\bibitem{Shafer87b}
G. Shafer, 
P.~P. Shenoy, and K.~Mellouli, 
\emph{Propagating belief
  functions in qualitative {M}arkov trees}, IJAR \textbf{1}(4), 349--400,1987.

\bibitem{smets93belief}
Ph. Smets, \emph{Belief functions : The disjunctive rule of combination and the
  generalized {B}ayesian theorem}, IJAR \textbf{9}, 1--35, 1993.

\bibitem{ubf}
Ph. Smets, \emph{The nature of the unnormalized beliefs encountered in the
  transferable belief model}, Proc. of UAI, 1992, pp.~292--29.

\bibitem{Wasserman1987some}
L. Wasserman, \emph{Some applications of belief functions to statistical
  inference}, {PhD} thesis, 1987.

\bibitem{Wasserman90}
\bysame, \emph{Belief functions and statistical inference}, Canadian Journal of
  Statistics \textbf{18}, 183--196, 1990.

\bibitem{YAMADA20081689}
Koichi Yamada, \emph{A new combination of evidence based on compromise}, Fuzzy
  Sets and Systems \textbf{159}(13), 1689 -- 1708, 2008.

\bibitem{yu94conditional}
C.~Yu and F.~Arasta, \emph{On conditional belief functions}, IJAR \textbf{10}, 155--172, 1994.

\end{thebibliography}

\providecommand{\bysame}{\leavevmode\hbox to3em{\hrulefill}\thinspace}
\providecommand{\MR}{\relax\ifhmode\unskip\space\fi MR }
\providecommand{\MRhref}[2]{%
  \href{http://www.ams.org/mathscinet-getitem?mr=#1}{#2}
}
\providecommand{\href}[2]{#2}

\end{document}